\newcommand{\old}[1]{}
\newtheorem{lemma}{Lemma}[section]
\newtheorem{theorem}[lemma]{Theorem}
\newtheorem{proposition}[lemma]{Proposition}
\newtheorem{corollary}[lemma]{Corollary}
\newcommand{\cL}{{\mathcal L}}
\newcommand{\cR}{{\mathcal R}}
\newtheorem{observation}{Observation}
\newcommand{\steven}[1]{\textcolor{black}{#1}}
\newcommand{\kelk}[1]{\textcolor{black}{#1}}
\begin{document}
\title[Reconstructing level-1 phylogenetic networks from triplets and clusters]{On the challenge of reconstructing level-1 phylogenetic networks from triplets and clusters}
\author{P. Gambette, K.T. Huber, S. Kelk}
\thanks{School of Computing Sciences, 
University of East Anglia, UK, \\
LIGM, Universit{\'e} Marne-la-Vall{\'e}e, France,\\
Department of Knowledge Engineering (DKE), Maastricht University, 
The Netherlands}

\date{4th April, 2016}
\begin{abstract}
Phylogenetic networks have gained prominence over the years due to 
their ability to represent complex non-treelike evolutionary events
such as recombination or hybridization. Popular combinatorial objects
used to construct them are triplet systems 
and cluster systems, the motivation being 
that any network $N$ induces a triplet system $\mathcal R(N)$ and a
softwired cluster system $\mathcal S(N)$. Since in real-world studies
it cannot be guaranteed that all triplets/softwired 
clusters induced by a network
are available, it is of particular interest to understand whether subsets of
  $\mathcal R(N)$ or  $\mathcal S(N)$
 allow one to uniquely reconstruct the underlying network $N$.
Here we show that even within the highly restricted yet
biologically interesting space of level-1 
phylogenetic networks   it is not always possible to uniquely 
reconstruct a level-1 network $N$\kelk{,} even when all triplets in  
$\mathcal R(N)$ or all clusters in $\mathcal S(N)$ 
are available. On the positive side, we introduce a
 reasonably large subclass of level-1
networks the members of which are uniquely determined by their
induced triplet/softwired cluster systems.
Along the way, we also establish various enumerative results, both 
positive and negative, including results which show
that certain special subclasses of level-1 networks $N$ can be uniquely 
reconstructed from proper subsets of
$\mathcal R(N)$ and $\mathcal S(N)$. We anticipate these results to
be of use in the design of 
algorithms for phylogenetic network inference.
\end{abstract}

\maketitle

\section{Introduction}
\kelk{Phylogenetic trees are essentially graph-theoretical
trees whose set of leaves is labelled by a set of species or organisms (more abstractly, \emph{taxa}) and which do not have any
degree-two vertices, except possibly the root. They have been the 
model of choice for many years for shedding light \steven{on} the 
evolutionary past of a set of taxa.} \kelk{However, in} cases where the taxa are suspected to 
have undergone reticulate evolutionary events such as
hybridization or recombination\kelk{, trees} have been
found to not always be appropriate \cite{S75}. The need for structures
capable of appropriately dealing with such data sets\kelk{,} combined with the
fact that different evolutionary processes have given rise to them\kelk{,}
has resulted in the introduction of a number of more general 
structures for representing evolutionary relationships. \kelk{Subsumed}  
under the  name \kelk{``phylogenetic network''} these include 
\kelk{hybrid phylogenies} \cite{BSS06}, \kelk{ancestral} recombination graphs \cite{H90},
galled trees \cite{gusfield2004optimal,WZZ01}, 
normal networks \cite{W10},
regular networks \cite{BSS04}, tree-sibling
networks \cite{CLRV08}, level-$k$ networks
\cite{JNS06,IKKSHB09},
median networks \cite{B94}
and NeighborNets \cite{BM03}, to name just a few, which
all generalize a phylogenetic tree in one way or another.

Apart from median networks and NeighborNets which 
are a special type of split-based phylogenetic network, the basic
graph-theoretical structure underpinning a phylogenetic network
is a rooted directed acyclic graph (DAG) that has a unique root 
and whose set of sinks is a given set of taxa.
\kelk{One} of the combinatorially simplest types of 
phylogenetic network\kelk{,} but still
complicated enough to be of interest to Evolutionary Biology\kelk{,} is that
of a binary level-$1$ network (see Fig.~\ref{fig:blobby-shapely} 
for an example).
\begin{figure}[!ht]
\includegraphics[scale=0.4] {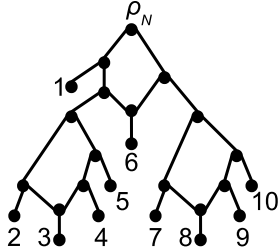} 
\caption{\label{fig:saturated-shapely} A \steven{binary level-1} phylogenetic network $N$
on  $X=\{1,\ldots, 10\}$ \steven{that is also 4-outwards and saturated. As in all figures all
arcs of \kelk{the} network are directed downwards, so we do not explicitly indicate the direction of arcs.}}
\label{fig:blobby-shapely}
\end{figure}
\textcolor{black}{Such structures} have attracted a considerable amount of interest in
the literature (see e.\,g.\,\cite{JNS06,gusfield2004optimal,rossello2009all,HIKS2011})
and can informally be thought of as \kelk{degree-constrained}
rooted DAGs with \steven{vertex-disjoint undirected cycles}. \kelk{(Formal definitions of all terms will follow in later sections)}.
However, this simplicity
has proven to be deceptive\steven{,} as the combinatorial structure 
of such networks has turned out to be more complicated than originally
thought (see e.g. \cite{GH12,HM13}). \kelk{Limits on our ability to reconstruct
level-1 networks constitute lower bounds on how well we can reconstruct
phylogenetic networks more generally. On the other hand, positive results for reconstructing level-1 networks can be an important first step towards algorithms for reconstructing more complex phylogenetic networks.}



In this paper, we \kelk{start by establishing a number of enumerative results for binary level-1 networks. These include upper and lower bounds on the number of vertices and arcs in such networks. We gradually shift our focus onto} 
{\em cluster systems}, that is, collections of non-empty
subsets of \kelk{the leaves}, \kelk{and} {\em triplet systems}, that is,
binary phylogenetic trees on just three leaves. \kelk{Guided by the fact that
these systems have been used} for 
reconstructing phylogenetic networks (see e.\,g.\,\cite{M09} and \cite{HRS10} 
for \steven{recent overviews}), 
we are particularly interested in finding bounds on the minimum size of a triplet
system/cluster system \kelk{required} to ``uniquely determine'' a level-1 network.
For trees this question is well understood. Specifically, for a phylogenetic tree $T$ on $n \geq 3$ leaves 
it is well-known that
$T$ is uniquely determined 
by its induced triplet system $\mathcal R(T)$ (leading to an
upper bound of ${n\choose 3}$ for such a minimum-sized set)
and that $n-2$ carefully chosen triplets from $\mathcal R(T)$
suffice to uniquely reconstruct $T$ when $T$ is binary (see Theorem 3 of \cite{Steel1992} 
and its Corollary). For  this case, it is also well-known that
 $T$ is uniquely determined 
  by its induced cluster system $\mathcal C(T)$
and that 
for a minimum-sized cluster system to uniquely determine $T$, 
it must have $|\mathcal C(N)|=2n-1$ elements.

As we shall see,
the situation is more complicated for binary level-1 networks.
Every level-1 network $N$
induces a triplet system $\mathcal R(N)$ and a certain
cluster system $\mathcal S(N)$ called the {\em softwired cluster system
of $N$} (see \cite{huson2011survey} for background) but
their ability to fully capture the topological structure of $N$ is
not as strong as one might hope.
Let us say that a binary level-1 network $N$ is {\em encoded}
by its induced triplet system if for every binary level-1 
network $N'$ such that $\mathcal R(N')=\mathcal R(N)$, we have $N = N'$. Continuing,
we say that a binary level-1 network is {\em 4-outwards} if its underlying graph  
does not have a cycle of length four or less. It is precisely 
the 4-outwards binary level-1 networks $N$ that 
are encoded
by $\mathcal R(N)$ as well as $\mathcal S(N)$ \cite{GH12} 
(where we define a binary level-1 network
to be encoded by its induced softwired  cluster system in an
analogous way). 

Intriguingly, if $\mathcal R(N')=\mathcal R(N)$ 
is replaced by $\mathcal R(N)\subseteq \mathcal R(N')$ (as is the
case in our formalization of ``uniquely determining'') then the 
assumption that $N$ is $4$-outwards is no longer strong enough
to guarantee uniqueness.  A similar observation holds for $\mathcal S(N)$
(see Sections~\ref{sec:triplets-define} and 
\ref{sec:define-clusters} for examples for both cases).  
However, the situation changes for both if, in addition to
being $4$-outwards we require that $N$ is {\em saturated}, that is,
none of its vertices is incident with more than one cut arc  
(Theorem~\ref{theo:shapely-saturated-triplet} and 
Theorem~\ref{theo:shapely-saturated-cluster}). 
Simple networks on $n\geq 4$ leaves are 
4-outwards, saturated networks that have precisely one cycle
in their underlying graph. \steven{We show that at most $2n-1$ 
carefully chosen triplets suffice to uniquely determine such networks}.
As the network on four leaves depicted in Fig.~\ref{fig:new} indicates, 
this bound is however not tight because five triplets suffice in that case (which can be checked by a simple case analysis).  
Given that any binary level-1 network $N$ contains at least one triplet for any 
three of its leaves and so $|\mathcal R(N)|\geq {n\choose 3}$ holds, this suggests
that at least for simple phylogenetic networks there is a considerable amount
of redundancy in $\mathcal R(N)$ with regards to reconstructing $N$
from  $\mathcal R(N)$. To establish a similar result for
general binary level-1 networks $N$ might not be \steven{straightforward} 
in view of Proposition~\ref{prop:triplet-system-size}\steven{,} 
which suggests that  $|\mathcal R(N)|$ is not easily expressible
in terms of a natural parameter associated \steven{with} a phylogenetic network $N$\steven{,} namely
its number of non-trivial cut arcs (see Section~\ref{arcs-vertices-galls}).
 This is somewhat surprising in view of the close
relationship between the triplet system induced by a binary  level-1 network
$N$ and its associated softwired cluster system $\mathcal S(N)$ 
(see e.\,g.\,\cite[Proposition 2 and Theorem 1]{GH12} for details 
concerning this relationship) because
the size of $\mathcal S(N)$ \emph{is} closely related to the number
of cut arcs of $N$ (Theorem~\ref{theo:countingClusters}). As in the
case of triplet systems,
it is easy to find examples of binary level-1 networks $N$ that indicate
that there is redundancy in the 
softwired cluster system induced by $N$ with regards to uniquely determining 
$N$. Again focusing on simple networks $N$, we show 
that at most $n$ carefully chosen
(softwired) clusters induced by $N$ suffice to uniquely determine $N$
(Corollary~\ref{cor:defineSimple}). However, we do not know if this
bound is sharp.

Given that in phylogenetic \steven{analyses}
one is hardly ever guaranteed to have all triplets/clusters induced by a 
(as yet unknown) phylogenetic network \steven{available}, the above observations 
have profound consequences for phylogenetic network reconstruction. 
One of the most important \steven{ones is} that 
a phylogenetic network reconstructed \steven{from} a triplet or cluster system
need not be the network that gave
rise to \steven{this system}.

The paper is organized as follows. In the next section, we present
basic terminology of relevance to this paper\steven{,} including the definition of
a level-$k$ network and that of a gall in a level-1 network. In 
Section~\ref{arcs-vertices-galls}, we define cut arcs and
present formulas for counting the
number of vertices, arcs, and galls in a binary level-1 network. 
These results
improve on the results in \cite{CJSS05} which imply that the number
of vertices in a binary level-1 network on $n$ leaves is linear in $n$
and that the number of hybrid vertices is at most $n-1$. In
Section~\ref{counting-softwired}, we formally define the
softwired cluster system $\mathcal S(N)$ 
induced by a binary level-1 network $N$
and establish Theorem~\ref{theo:countingClusters}.
In Section~\ref{counting-triplets}, we define the triplet 
system $\mathcal R(N)$
induced by a binary level-1 network $N$ and establish 
Proposition~\ref{prop:triplet-system-size}. In Section~\ref{sec:triplets-sn},
we establish in Proposition~\ref{prop:samecut}
a relationship between the triplet system
induced by a binary level-1 network $N$ and a certain partition 
of the leaf set of $N$ 
that will be crucial for showing
Theorem~\ref{theo:shapely-saturated-triplet}.   
In Section~\ref{sec:triplets-define}, we first formalize the 
notion of ``uniquely 
determining'' and then present the aforementioned examples for triplet 
systems. Starting
in that section and continuing in Section~\ref{sec:define-clusters}, 
we investigate saturated, 
4-outwards, binary level-1 networks
and establish Theorem~\ref{theo:shapely-saturated-triplet} and 
Theorem~\ref{theo:shapely-saturated-cluster}, respectively.


\section{Definitions and Notation}\label{notation}

\noindent
\kelk{In this section we present only basic definitions and notation to avoid overloading the
reader. Concepts such as triplets and (softwired) clusters are formalized in subsequent sections.}

Throughout the paper, let $X$ denote a finite set of size  
$n\geq 2$. Also all graphs $G$ considered have non-empty
finite sets of vertices and edges (or arcs in case $G$ is
directed) and have
no loops or multiple edges (or arcs in case $G$ is
directed). 

Suppose for the following that 
$G=(V,A)$ is a directed acyclic graph (DAG). 
If $v$ and $w$ are vertices of $G$ such that
there exists an arc $a$ from $v$ to $w$ in $G$ then we denote that 
arc by $(v,w)$ and refer to $v$ as the {\em tail of $a$}, denoted by
$tail(a)$, and to $w$ as the {\em head of $a$}, denoted by $head(a)$.
Suppose $v\in V$ is a vertex of $G$.
Then we denote by
\kelk{$outdeg(v)$} the out-degree of $v$ \steven{(i.e. the number of arcs whose tail is
incident to $v$)} and by \kelk{$indeg(v)$} the
{\em in-degree} of $v$ \steven{(the number of arcs whose head is incident to $v$)}. The sum of the
out-degree and the in-degree of $v$ is called the {\em degree} of 
$v$\steven{,} denoted by \kelk{$deg(v)$}.  If $indeg(v)=1$ and $outdeg(v)=0$ then
$v$ is called a {\em leaf} of $G$. The set of leaves of $G$ is denoted by
$L(G)$. Every vertex in $V-L(G)$ is called an {\em interior vertex}
of $G$. 
 If $G$ has a unique vertex $\rho=\rho_G\in V$
with $indeg(\rho)=0$ and $outdeg(\rho)\geq 2$ then $\rho$ is called
the {\em root} of $G$ and $G$ is 
called a {\em rooted DAG}. If $G$ is a rooted DAG with leaf set
$X$ and  $G'=(V',A')$ is a further rooted DAG  
with leaf set $X$ then we say that $G$ is {\em equivalent} to $G'$ 
\steven{if there exists a graph isomorphism from $G$ to $G'$ that is the
identity on $X$}.


A {\em phylogenetic network $N$ on $X$} 
is a rooted DAG whose set of leaves is $X$,
and every interior vertex $v$ of $N$ except the root $\rho_N$ is either (i) a
{\em \steven{split} vertex} of $N$, that is, $indeg(v)=1$ and $outdeg(v)\geq 2$
or (ii) a {\em hybrid vertex} of $N$, that is,  $indeg(v)\geq 2$ and 
$outdeg(v)\geq 1$. In case only the size of $X$ 
is of relevance to the discussion then we will simply call
$N$ a {\em phylogenetic network on $|X|$ leaves} and if the set $X$ is of 
no relevance to the discussion then we will simply call a phylogenetic 
network $N$ on $X$ a {\em phylogenetic network}.  
We denote the set of hybrid vertices of 
a phylogenetic network $N$ by $H(N)$
and say that $N$ is {\em binary}
if the root of $N$ as well as every \steven{split} vertex 
of $N$ has out-degree two and
\steven{every hybrid vertex of $N$ has out-degree one and in-degree 2}. 


\steven{An undirected} graph $G$ is  called 
{\em biconnected} if $G$ is connected and $G - v$ is connected
for all $v \in V(G)$.
\steven{A maximal biconnected subgraph $H$ of $G$ is called
a \emph{biconnected component} of $G$. (We say that a biconnected component is
\emph{non-trivial} if it contains more than one edge).} \kelk{Let $U(N)$ be the \emph{underlying graph} of $N$ i.e. the undirected graph obtained
from $N$ by ignoring the orientation of its arcs.} \kelk{We say that a binary phylogenetic network $N$ is}
 a {\em level-$k$ (phylogenetic) network},
if every biconnected component of $U(N)$ contains
at most $k$ hybrid vertices. 
Reflecting the fact 
that a cycle of length three in the underlying graph of a 
phylogenetic network is indistinguishable 
(from a triplet or cluster perspective)
from a \steven{split} vertex, we follow common practice and 
will always assume that a cycle in the
underlying graph of a level-1 network $N$ contains at 
least four vertices. 

 Note that
a phylogenetic network $N$
for which $H(N)=\emptyset$ holds is simply a {\em rooted phylogenetic tree} on $X$ (sensu
\cite{SS03}).
Thus,  
level-0 networks are rooted phylogenetic trees. All phylogenetic trees considered
in this article are rooted so we henceforth drop the ``rooted'' prefix.

We denote the class of all binary
level-1 networks on $n\geq 2$ leaves by $\cL_1(n)$. Alternatively,
we will also use $\cL_1(X)$ to denote that class if we
want to emphasize the leaf set $X$ of the networks in  $\cL_1(n)$.

Now, suppose that $N$
is a level-$k$ network, $k\geq 1$.  Then we call $N$ 
{\em proper} if $N$ is not also a level-$l$ network for some
$0\leq l\leq k-1$. Note that in case $k=1$ such a network
must have at least three leaves and at least one hybridization vertex. In that case, we call a \steven{non-trivial} biconnected component 
of $U(N)$ with its original directions in $N$ restored a {\em gall}
of $N$ and denote the set of galls of a level-1 network $N$
by $\mathcal G(N)$. If
$N$ is binary, contains precisely one gall $C$, and every
leaf of $N$ is adjacent with a vertex of $C$ then 
$N$ is called {\em simple}. Together with phylogenetic trees,
such networks may be viewed as the building blocks of
(proper) level-1 networks \cite{IKKSHB09}. For the convenience of
the reader, we present examples of
two simple level-1 networks on $X=\{x_1,\ldots, x_5\}$ in 
Fig.~\ref{fig:simple}.
%
%
\begin{figure}[!ht]
\begin{tabular}{ccc}
\includegraphics[scale=0.4] {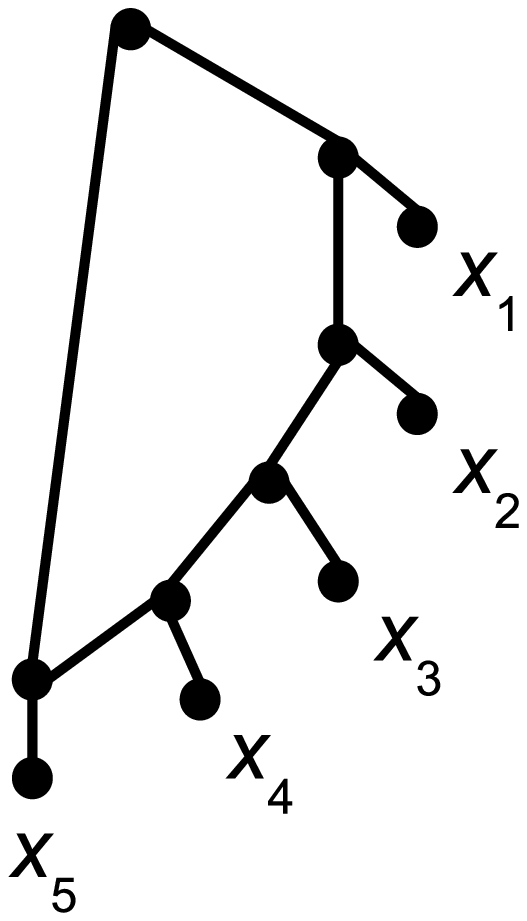} & 
\includegraphics[scale=0.4] {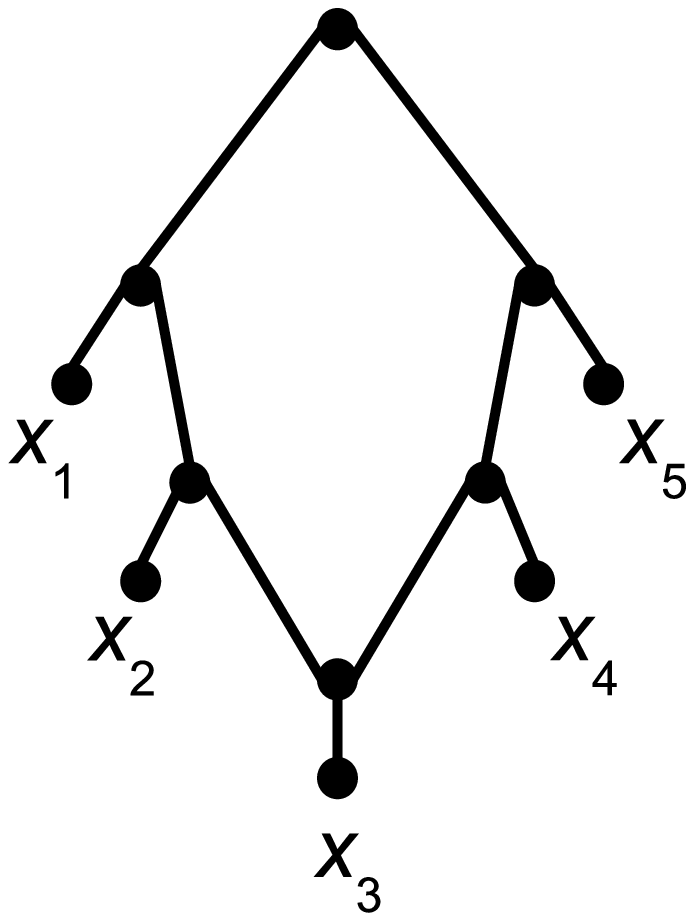} \\
(i) & (ii)
\end{tabular}
\caption{\label{fig:simple} Two examples of simple level-1 networks
on $X=\{x_1,\ldots, x_5\}$. Note that both networks are
4-outwards and saturated.
}
\end{figure}
 
\section{Counting arcs, vertices and galls}\label{arcs-vertices-galls}
In this section, we present some enumerative results concerning the
number of vertices, arcs, and galls of a  level-1 network.
We start by introducing some relevant notation. Suppose $N$
is a phylogenetic network on $X$. Following \cite{ISS10}, 
we say that a phylogenetic
tree $T$ on $X$ is  {\em displayed} by $N$ if there exists a 
subgraph $N'$ of $N$ that is a \emph{subdivision} of $T$ \steven{i.e. $T$ can be obtained
from $N'$ by repeatedly suppressing vertices with in-degree and out-degree both equal to 1.}
For $N$
a level-1 network, we denote the number of
galls of $N$ by $g(N)$, that is, we \steven{let} $g(N)=|\mathcal G(N)|$.

\subsection{Counting arcs and vertices}

In case $N$ is a binary level-0 network on $n\geq 2$ 
leaves, that is, $N$ is a 
\textcolor{black}{binary} phylogenetic tree on $n$ leaves, it is easy to see 
 that $N$ has $2n-1$ 
vertices and $2(n-1)$ edges (see e.\,g.\,\cite[Proposition 2.1.3]{SS03}
for the corresponding result for unrooted binary phylogenetic trees). 
For the more general case that 
$N$ is a binary, proper, level-$k$ network on $n\geq 2$ 
(and thus on $n\geq 3$) leaves, \steven{and} $k\geq 1$, 
 it was shown in \cite[Lemma 4.5]{I09}
that any such network can contain at most $2n-1+k(n-1)$
vertices and at most $2n-2+\frac{3}{2}k(n-1)$ arcs. 
Denoting for $n\geq 3$
the \steven{sub}class of all proper level-1 networks in  
 $\cL_1(n)$ by $\cL_1(n)^-$, 
the sizes of the vertex and arc \steven{sets} of a network $N=(V,A)$ in 
$\in\cL_1(n)^-$ can \steven{thus} be at most $3n-2$ 
and $3.5(n-1)$, respectively. Moreover,
if follows from  \cite[Lemma 4.4]{I09} that $|V|=2n+1=|A|$
holds in the special case that $N$ is simple. The next
result indicates that the size of the vertex set of a simple level-1 network
lends itself to providing lower bounds on the sizes of the vertex set and
arc set of a general  proper level-1 network, respectively.
 
\begin{lemma}
\label{lem:countbounds}
Let $n\geq 3$ and suppose $N = (V,A)\in\cL_1(n)^-$. 
Then $2n + 1 \leq |V| \leq 3n-2$
and $2n + 1 \leq |A| \leq 3.5(n-1)$. \steven{These} bounds 
are tight if $n=3$\steven{,} 
in which case $N$ must be a simple
level-1 network.
\end{lemma}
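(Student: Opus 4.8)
The plan is to obtain the two upper bounds by invoking a known result, to derive the two lower bounds by an elementary degree count, and then to analyse the case $n=3$ separately. For the upper bounds there is nothing new to prove: specialising \cite[Lemma 4.5]{I09} to $k=1$ gives precisely $|V|\leq 3n-2$ and $|A|\leq 3.5(n-1)$, exactly as recorded in the discussion preceding the statement.

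For the lower bounds, I would partition $V$ into the root $\rho_N$, the leaf set $X$, the set $S$ of split vertices distinct from $\rho_N$, and the hybrid set $H(N)$. Since $N$ is binary, counting arcs by their heads gives $|A| = |S| + n + 2|H(N)|$, while counting them by their tails gives $|A| = 2 + 2|S| + |H(N)|$. Equating the two expressions yields $|S| = n + |H(N)| - 2$, and substituting this back into $|V| = 1 + |S| + n + |H(N)|$ and into either expression for $|A|$ produces
\[
|V| = 2n - 1 + 2|H(N)| \qquad\text{and}\qquad |A| = 2n - 2 + 3|H(N)|.
\]
As $N\in\cL_1(n)^-$ is proper it contains at least one hybrid vertex, so $|H(N)|\geq 1$, and the lower bounds $|V|\geq 2n+1$ and $|A|\geq 2n+1$ follow at once.

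It remains to handle $n=3$, where $3n-2 = 3.5(n-1) = 2n+1 = 7$, so the lower and upper bounds coincide and force $|V| = |A| = 7$; by the displayed identities this forces $|H(N)| = 1$ and hence $|S| = n + |H(N)| - 2 = 2$. Since the galls of a binary level-1 network are vertex-disjoint undirected cycles, each containing exactly one hybrid vertex, $N$ then has exactly one gall $C$. Traversing $C$, its unique ``lowest'' vertex is the hybrid vertex of $N$ and carries a single pendant arc off $C$; its unique ``highest'' vertex $r$ is $\rho_N$ or a split vertex both of whose out-arcs lie on $C$, hence carries no pendant arc; and every other vertex of $C$ is a split vertex carrying exactly one pendant arc off $C$. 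Because $N$ has only two split vertices in total and $C$ has at least four vertices, $C$ has exactly four vertices, both split vertices of $N$ lie on $C$, and $r=\rho_N$. Consequently the head of each pendant arc leaving $C$ is a leaf (it is neither a split vertex, nor a hybrid vertex, nor the root), so every leaf of $N$ is adjacent to a vertex of $C$; that is, $N$ is simple, and the values $|V| = |A| = 7 = 2n+1$ show both bounds are attained.

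I expect the degree count and the $n=3$ arithmetic to be routine. The delicate step is the structural analysis of $C$ when $n=3$: there one has to combine the constraint $|S|=2$ with the ``cycle of length at least four'' convention to pin down $C$ and the position of the root exactly, and to check that no pendant arc of $C$ leads into a nontrivial subtree.
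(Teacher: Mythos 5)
Your proof is correct, but for the lower bounds you take a genuinely different route from the paper. The paper also gets the upper bounds by specialising \cite[Lemma 4.5]{I09}, but it derives $|V|\geq 2n+1$ and $|A|\geq 2n+1$ by attaching a dummy taxon $y$ above the root, taking a binary tree $T$ on $X\cup\{y\}$ displayed by the resulting network, and counting the degree-two vertices of the embedded subdivision of $T$; this yields $|V|=2n-1+2g$ and $|A|=2n+3g-2$ in terms of the number of galls $g\geq 1$. You instead double-count arcs by heads and by tails over the partition of $V$ into root, leaves, split and hybrid vertices, obtaining $|S|=n+|H(N)|-2$ and hence the exact identities $|V|=2n-1+2|H(N)|$ and $|A|=2n-2+3|H(N)|$, with $|H(N)|\geq 1$ from properness. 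The two computations agree, since in a binary level-1 network $g(N)=|H(N)|$, but your handshake argument is more elementary (no displayed trees, no dummy taxon needed to anchor the embedding at the root) and in fact holds for arbitrary binary phylogenetic networks, whereas the paper's argument ties the count directly to the gall structure that it reuses later. For $n=3$ the paper merely remarks that all expressions evaluate to $7$ and that $N$ must be simple since otherwise it would be a tree or violate the convention that cycles have at least four vertices; your more detailed structural analysis (forcing $|H(N)|=1$, $|S|=2$, the root onto the unique gall, the gall to have exactly four vertices, and all pendant arcs to end in leaves) is sound and fills in that verification explicitly.
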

\begin{proof}
\steven{Suppose $X$ has size $n$ and assume that 
$N=(V,A)$ is a network in $\cL_1(n)^-$ with leaf set $X$. 
The upper bounds have already been established in the above discussion, so it suffices to prove that 
$2n+1\leq |V|$ and $2n+1\leq |A|$. Let $g=g(N)$
and note that $g\geq 1$ holds because $N$ is assumed to be proper.
We start by adding a new taxon $y \not \in X$ just above the root of $N$, in the following way: introduce
a new vertex $u$, add an arc from $u$ to the root of $N$, and add an arc from $u$ to $y$. Let $N^{y}$ be this
new network, whose root is $u$. $N^{y}$ has $|V|+2$ vertices and $|A|+2$ arcs.
Let $T = (V_T, A_T)$ be any binary phylogenetic tree on $X \cup \{y\}$ that is
displayed by $N^{y}$. 
Then there exists a subgraph $T'$ of $N^{y}$ that
is a subdivision of $T$. Observe that $T'$ must contain
$2g$ vertices whose in-degree and out-degree (in $T'$) are both equal to 1. Specifically,
$g$ of them are hybrid vertices of $N^{y}$ and the other
$g$ are tails of arcs in $N^{y}$ whose head is contained in $H(N^{y})$. 
(The correctness of these claims requires the root of $T'$ to be the same vertex as the root of $N^{y}$, and this is the reason for the addition of $y$ in the first place.)
Consequently, $|V_T|=(|V|+2)-2g$. Noting that $T$ has $2(n+1)-1$ vertices (i.e. because it is binary) we have,
$$
|V| =|V_T|+2g-2
=(2(n+1)-1)+2g - 2
=2n - 1 + 2g
\geq 2n+1,
$$
where the last inequality follows because $g \geq 1$.}
\steven{
Similarly, to obtain $T'$ from $N^{y}$, we need to delete
for every hybrid vertex $h\in H(N^{y})$ precisely 
one of its incoming arcs $(v,h)$.  Hence, both $v$ and $h$ will 
have in-degree and out-degree 1 in $T'$. (Note
that $v$ might be the root of the gall that contains $h$ in $N^{y}$).
 Hence, $|A_T| = (|A|+2)-g-2g$. Noting (again, because it is binary)
that $T$ has $2(n+1)-2$ arcs, we have
$$
|A|=|A_T|+3g - 2
= 2(n+1)-2 + 3g - 2
= 2n + 3g - 2
\geq 2n + 1
$$
where, as before, the last inequality follows because $g \geq 1$.}

\steven{It can easily be verified that the bounds are tight in the case $n=3$. Specifically, all expressions evaluate
to 7. Finally, if $n=3$ then $N$ must be a simple level-1 network, because otherwise it would either be a tree (and thus not proper) or violate the assumption that every cycle in the 
underlying graph of $N$ contains at 
least four vertices.} 
\end{proof}

\subsection{Counting galls}

We next establish a formula for counting the number 
of galls of a  level-1 network. To this end, we require further
terminology. Suppose $G=(V,A)$ is a rooted DAG. Then an
arc $a \in A$ is called a {\em cut arc} of $G$ if the deletion of $a$ 
disconnects the underlying graph
$U(G)$ of $G$. If $a$ is a cut arc of $G$ such that $head(a)$ is 
a leaf of $G$ then we call $a$ a {\em trivial} cut arc of $G$
and a {\em non-trivial} cut arc of $G$ otherwise.
We denote the number of non-trivial cut arcs of a level-1 network $N$ 
by $c_N$. 

Suppose $N$ is a level-1 network on $X$. For a gall $C$ of $N$, we call
an arc of $N$ whose tail but not its head is a vertex of $C$ an
{\em outgoing arc} of $C$. Note that our assumption \steven{that every cycle in $U(N)$ has
at least four vertices} 
implies that every gall must have at least
three outgoing arcs. Moreover, if $N$ is binary
then we call two distinct leaves $x$ and $y$ of $N$ a {\em cherry} of 
 $N$ if $x$ and $y$ have a common parent. Note that that 
 parent must be a \steven{split} vertex of $N$. In addition, if $N$ is a binary
phylogenetic tree 
 and $|X|=3$ then $N$ is called a {\em triplet (on $X$)}. Saying that
a vertex $v$ of a rooted DAG $G$ is {\em below} a vertex $w$ of $G$ if
$w$ lies on a directed path from the root of $G$ to $v$ but is distinct
from $v$, we denote a triplet $t$ on $X=\{a,b,c\}$ for
which  the last common ancestor of
$a$ and $b$ is below the root of $t$ by
$ab|c$ (or equivalently $c|ab$). Finally, a collection
$\mathcal R$ of triplets is called a {\em triplet system (on
$\bigcup_{t\in\mathcal R} L(t)$)}.

\begin{theorem}\label{theo:gall-number}
Let $n\geq 2$ and suppose $N\in\cL_1(n)$. 
Then  $g(N)\leq n - c_N - 2$ and 
this bound is tight if either $N$ is a phylogenetic
tree or every gall of $N$ has exactly three outgoing arcs.
\end{theorem}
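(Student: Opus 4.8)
The plan is to prove the inequality $g(N)\leq n - c_N - 2$ by an Euler-type counting argument on $U(N)$, and then separately analyse the equality case. First I would set up the relevant counts. Each gall $C$ of $N$ is a cycle in $U(N)$; on it lies exactly one hybrid vertex (the bottom of the cycle) and exactly one ``top'' vertex (the root of the gall), and since every cycle has length $\geq 4$ there are at least two further vertices on $C$, each of which is the tail of exactly one outgoing arc of $C$. More generally, counting outgoing arcs: if gall $C$ has $\ell(C)\geq 4$ vertices, then it has $\ell(C)-1$ outgoing arcs (every vertex of $C$ except the hybrid vertex emits one arc not on $C$; the root of the gall emits two such arcs if it is also the root of $N$, but in the generic case one of its two out-arcs leaves $C$ — I would be careful here and handle the root of $N$ as a small special case). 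The key structural fact I want is a bijection-type relationship: contracting each gall of $N$ to a single vertex yields a tree (because the galls are the non-trivial biconnected components and $N$ is level-1, so $U(N)$ with each gall contracted is acyclic and connected), and then I can apply the standard edge/vertex count for trees.

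The cleanest route is probably via the ``blob tree'' or cut-arc structure. The non-trivial cut arcs of $N$, together with the galls and the leaves, organise $U(N)$ into a tree-like decomposition: think of an auxiliary tree whose nodes are the galls of $N$ plus the ``tree part'' and whose edges correspond to the $c_N$ non-trivial cut arcs (plus $n$ trivial cut arcs to the leaves). I would count vertices and edges of $U(N)$ two ways. On one hand, summing contributions of the galls: each gall $C$ contributes $\ell(C)$ vertices and $\ell(C)$ edges (the cycle). On the other hand, the tree obtained by contracting all galls is a tree on $n$ leaves together with the $g(N)$ contracted-gall nodes and some ordinary tree vertices; here I would invoke the linear bound on vertices implicit in Lemma~\ref{lem:countbounds} and the fact that introducing a gall with exactly three outgoing arcs adds a fixed surplus. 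Carrying the bookkeeping through, the inequality $g(N)\leq n-c_N-2$ should drop out, with the slack being exactly $\sum_{C\in\mathcal G(N)}(\ell(C)-4) = \sum_{C}(\text{outdeg}(C)-3)$, i.e. the total excess of outgoing arcs beyond the minimum of three per gall.

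Given that, the equality analysis is immediate. If $N$ is a phylogenetic tree then $g(N)=0$, $c_N = n-2$ (a binary tree on $n$ leaves has $n-2$ internal edges, all non-trivial cut arcs), and indeed $0 = n - (n-2) - 2$. If every gall has exactly three outgoing arcs then the slack term $\sum_C(\text{outdeg}(C)-3)$ vanishes and equality holds; conversely, any positive excess in some gall strictly decreases $g(N)$ relative to the bound. I would present this as: the bound is $g(N) = n - c_N - 2 - \sum_{C\in\mathcal G(N)}(\text{(number of outgoing arcs of }C) - 3)$, from which both the inequality and the tightness claim follow.

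The main obstacle I anticipate is getting the boundary cases of the counting exactly right — in particular, how the root $\rho_N$ of the whole network interacts with a gall when $\rho_N$ is itself the top vertex of that gall (then $\rho_N$ has out-degree $2$ with both out-arcs possibly leaving the cycle, or one on the cycle), and making sure the ``contract each gall to a point gives a tree'' step is justified cleanly from the level-1 hypothesis and the assumption that galls are the non-trivial biconnected components. I would isolate that as a short preliminary observation. Once the decomposition into galls plus a contracted tree is rigorously in hand, the arithmetic is routine and the equality characterisation is essentially built into the identity above.
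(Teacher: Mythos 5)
Your proposal is correct in substance, but it takes a genuinely different route from the paper. The paper proves the bound by induction on $n$: for $n\geq 4$ it either deletes a leaf hanging off a gall with at least four outgoing arcs, contracts a gall having exactly three outgoing arcs (all trivial) into a single leaf, or deletes one leaf of a cherry, and tracks how $g$ and $c_N$ change in each case. You instead aim at the exact identity $g(N)=n-c_N-2-\sum_{C\in\mathcal{G}(N)}(\mathrm{outdeg}(C)-3)$, and this identity is indeed valid; moreover the bookkeeping you leave implicit goes through cleanly and does not even need Lemma~\ref{lem:countbounds}. Concretely: contracting every gall of $N$ to a single vertex yields a rooted tree $B$ whose arc set is exactly the set of cut arcs of $N$, so $|A(B)|=n+c_N$; counting arcs of $B$ by out-degrees (each of the $t$ internal vertices of $N$ lying on no gall contributes $2$, each contracted gall contributes its $\ell(C)-1$ outgoing arcs, leaves contribute $0$) and comparing with $|A(B)|=|V(B)|-1=n+g(N)+t-1$ eliminates $t$ and gives precisely your identity, from which the inequality follows since $\mathrm{outdeg}(C)=\ell(C)-1\geq 3$, and the tightness claim follows as well -- in fact you get an equality characterisation (equality iff $N$ is a tree or every gall has exactly three outgoing arcs), which is stronger than the one-directional statement in the theorem. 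Two small slips to repair in your write-up: it is the source (top) vertex of a gall, not its hybrid vertex, that emits no arc off the cycle -- the hybrid vertex does emit one -- although the total count $\ell(C)-1$ you state is unaffected; and the root of $N$ needs no special treatment, since if it lies on a gall it is that gall's source and both of its out-arcs lie on the cycle. In terms of trade-offs, the paper's induction stays local and elementary and avoids any global decomposition, while your gall-contraction count is shorter once the (easy) facts that the contracted graph is a tree and that every outgoing arc of a gall is a cut arc are isolated, and it yields an exact formula for $g(N)$ rather than only a bound.
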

\begin{proof}
We prove the theorem by induction on $n\geq 2$.
Suppose $N\in \cL_1(n)$.
Then the stated inequality clearly holds in the form of an
equality for $n=2$ since in
that case $N$ is a phylogenetic tree. It also holds for $n=3$ because in 
that case $N$ is either a triplet and so has one non-trivial
cut arc but no gall, or $N$ is a simple level-1 network and so has precisely
one gall but no non-trivial cut arcs.

Suppose that $N$ has $n\geq 4$ leaves 
and assume that $g(N')\leq n-1-c_{N'}-2 $ holds for all
level-1 networks $N'\in \cL_1(n-1)$. Clearly, $g(N)= n-c_N-2 $
holds in case $N$ is a phylogenetic tree as in that case $g(N)=0$
and every non-trivial cut arc of $N$ is an interior edge of $N$\steven{,}
of which there are $n-2$. So assume that
$N\in \cL_1(n)^-$. To see the stated
bound for $g(N)$, we distinguish between the cases
that (i) $N$ contains a gall $C$ whose
outgoing arcs are all trivial cut arcs and (ii) that this is not the case, 
that is, $N$ contains a cherry.

Assume first that Case (i) holds. We distinguish the
cases that $C$ has three outgoing arcs and that $C$ has
at least four outgoing arcs. Assume first that  $C$ has
at least four outgoing arcs. Then there must exist a
leaf $a$ of $N$ that is the head of an outgoing arc
of $C$ but is not adjacent with the unique hybrid vertex of $C$.
Consider the rooted DAG $N'$ obtained from $N$ by first
removing $a$, its parent $a'\in V(N)$, and all arcs
adjacent with $a'$ and then adding a new arc 
from the parent of $a'$ to the 
child of $a'$ contained in $V(C)$. Clearly,  
$N'$ is a binary level-1 network on $L(N) \setminus \{a\}$
and $g(N)=g(N')$ and $c_N=c_{N'}$ both hold.
Since $|L(N')|= n-1$, we have
$g(N)=g(N')=n-1 - c_{N'} -2=n-3 - c_N< n-c_N-2$, by \steven{the} induction hypothesis.
Consequently, $g(N)<n - c_N -2$ holds in
this case.

Next, assume that $C$ has exactly three outgoing arcs
$a_1,a_2,a_3$.
Let $N'$ be the rooted DAG obtained from $N$ by contracting
$C$ as well as $a_1$, $a_2$, and $a_3$ into a new leaf $x$.
 Clearly, $N'$ is a binary level-1 network on 
 $L(N)\cup\{x\}\setminus\{head(a_1),head(a_2), head(a_3)\}$
and $g(N')=g(N)-1$ and $c_{N'}=c_N-1$.
\steven{Thus, $g(N') \leq n-2 - c_{N'} -2$ and, so, 
$g(N) \leq n-c_N-2$ holds in this case too.}

\steven{Last but not least}, assume that Case (ii) holds, that is, $N$ contains two
leaves $x$ and $y$ that form a cherry. Let $N'$ denote the
rooted DAG obtained from $N$ by first deleting $x$, its
parent $p$, and all arcs incident with $p$ and then adding a new
arc from the parent of $p$ to $y$. Clearly $N'$ is a binary level-1 network
on $L(N)\setminus \{x\}$ and $g(N')=g(N)$ and $c_{N'}=c_N-1$ both hold.
Consequently, $g(N)=g(N') \leq n-1 - c_{N'} -2= n-1-(c_N-1)-2=n-c_N-2$
holds by \steven{the} induction hypothesis. This concludes the proof of this case and
thus the proof of the stated bound for $g(N)$.

It remains to establish that the stated bound for $g(N)$ 
is tight for a level-1
network $N\in \cL_1(n)$ for which
all of its galls have precisely three outgoing arcs. To see this\steven{,} one can 
again perform induction on $n\geq 2$ but this time assuming that 
$g(N')=n-c_{N'}-2$ holds 
for all level-1 networks $N'\in\cL_1(n-1)$ for which every gall
has precisely three outgoing arcs.
\steven{In this context} it should  be noted that the cases $n\in\{2,3\}$ 
and $N$ is a phylogenetic tree on $n$ leaves have already been 
observed above. We leave the details to the interested reader.
\end{proof}

\section{Counting clusters and triplets}
In this section we establish enumerative results for computing
the sizes of the so-called hardwired and softwired
cluster system, respectively, that have both been
introduced in the literature for phylogenetic network
reconstruction \textcolor{black}{\cite{HRS10}}. In addition, we  establish that
the corresponding result for triplets does not hold.
We start with clusters.

\subsection{Counting clusters}
\label{counting-softwired}
We call a non-empty subset of $X$ a {\em cluster} 
and refer to a set of clusters of $X$ as a {\em cluster system on $X$}, 
or just a cluster system if the set $X$ is clear from the context.
Suppose for the following that $N$ is a phylogenetic network on $X$
and that $v\in V(N)$. Then we define the cluster $C_N(v)$ associated \steven{with}
$v$ to comprise of
all leaves of $N$ that are below $v$ and \steven{let} $C_N(v)=\{v\}$
in case $v$ is a leaf of $N$. Again, we simplify our notation 
by writing $C(v)$ rather than $C_N(v)$ if $N$ is clear from the context.
Note that $C(\rho_N)=X$. Then the 
{\em hardwired cluster system} $\mathcal C(N)$ 
associated \steven{with} $N$ is the cluster system $\{C(v)\,:\, v\in V(N)\}$.
Note that if $N\in\cL_1(X)^-$\steven{,} then 
Lemma~\ref{lem:countbounds}
implies that $2n+1\leq |\mathcal C(N)|\leq 3n-2$ 
and if $N$ is \steven{a} phylogenetic tree\steven{,} then $|\mathcal C(N)|=|V(N)|=2n-1$\steven{,}
where $n$ denotes $|X|$ \steven{in both cases}.
 Denoting by $\mathcal T(N)$ the 
set of phylogenetic trees on $X$ 
displayed by $N$, the  {\em softwired cluster system} $\mathcal S(N)$
associated \steven{with} $N$ is defined as
$\mathcal S(N)=\bigcup_{T\in\mathcal T(N)} \mathcal C(T)$. 

To illustrate this definition\steven{,} consider the level-1 network $N_1$  
on $X=\{x_1,\ldots,x_5 \}$
depicted in Fig.~\ref{fig:softwired-clusters}(i). 
Then $\mathcal S(N_1)$
comprises the clusters $ X$, 
$\{x_2, x_3, x_4, x_5\}$, $\{x_3,x_4,x_5\}$, $\{x_4,x_5\}$, 
$\{x_2,x_3,x_4\}$, $\{x_3,x_4\}$, $\{x_1\}$, $\{x_2\}$,
 $\{x_3\}$, $\{x_4\}$, and $\{x_5\}$.

\begin{figure}[!ht]
\begin{tabular}{ccc}
\includegraphics[scale=0.4] {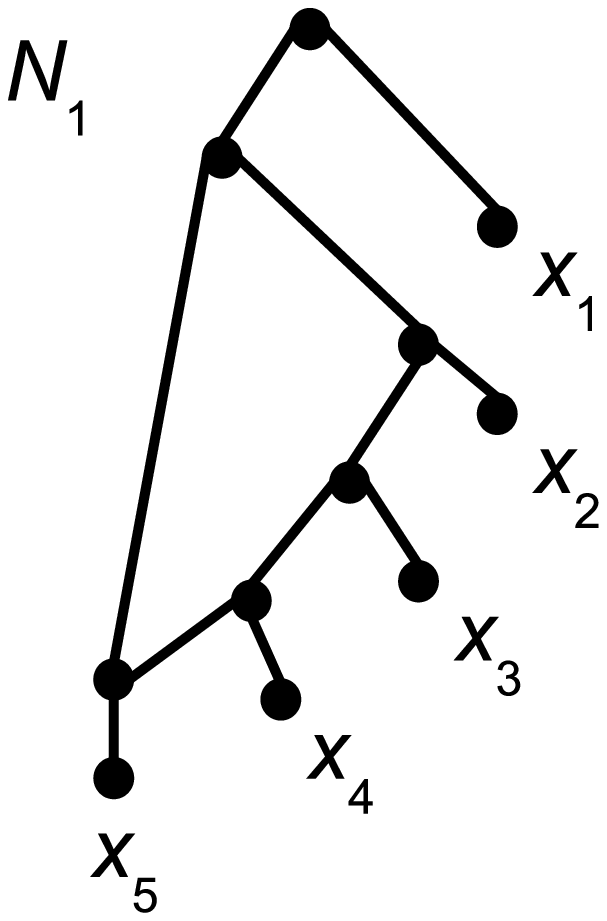} & 
~~~~~~~~~~~~~~~~ &
\includegraphics[scale=0.4] {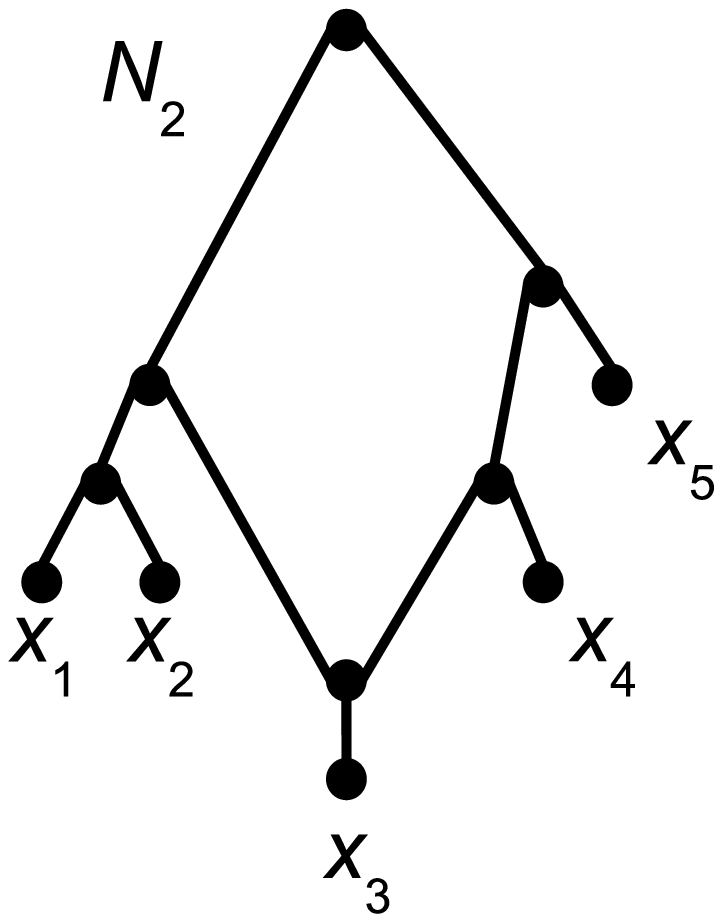}\\
(i) & ~ & (ii)
\end{tabular}
\caption{\label{fig:softwired-clusters} Two networks $N_1$ and $N_2$ on $X=\{x_1,\ldots, x_5\}$.
Note that both networks are 4-outwards, but neither is saturated.}
\end{figure}

\steven{It is not too difficult to argue that $\mathcal S(N)$ contains $O(n)$ clusters. To
see this, let $T$ be a tree on $X$ displayed by $N$ and let $v$ be a vertex of $T$. 
From the definition of display it follows that a subdivision of $T$ can be topologically
embedded within $N$. Fix such an embedding, and let $T'$ and $v'$ be the
images of $T$ and $v$ in $N$. If $v'$ is the head of a cut arc in $N$, or the root of $N$, then $C_T(v)$ will be equal to $C_{N}(v')$, irrespective of the exact embedding. If $v$ is not
the head of a cut arc, nor the root, then it is a vertex of some gall of $N$. In that case, there are
only (at most) two possibilities for $C_T(v)$. Specifically, the choice of cluster is completely determined by which of the two edges incoming
to the hybridization vertex in the gall, are in $T'$ (irrespective of the exact topology of the embedding). Now, from Lemma~\ref{lem:countbounds} $N$
contains $O(n)$ vertices. Given that (as argued) each vertex can contribute at most two clusters, it follows that $\mathcal S(N)$ contains $O(n)$ clusters.}
The next result improves on this $O(n)$ observation
by providing a formula for the size of the closely related
cluster system $\mathcal S(N)^-:=\mathcal S(N)\setminus\{X\}$.
\steven{(This is also an improvement on the result
presented in~\cite[Proposition 3]{GH12}.)} To establish it, we
require further terminology.
%

Suppose $N\in\cL_1(X)$ and
$X' \subseteq X$.
Then we define the {\em restriction $N|_{X'}$ of $N$
to $X'$} to be the
network in $\cL_1(X')$ obtained from $N$ 
by \textcolor{black}{deleting all vertices in $X - X'$ and then} applying the following \kelk{``cleaning up''} operations in any order until no
more can be applied\footnote{\kelk{In this paper only a subset of these ``cleaning up'' operations will be required. However, we list them all to retain consistency with the wider literature.}} :  (i) suppressing vertices with
in-degree and out-degree both equal to one; (ii) deleting vertices 
with out-degree zero that are not an element in $X$; (iii) collapsing
multi-arcs into a single arc; (iv) if a gall $G$ has
been created that has 
exactly two outgoing cut arcs 
$(u,v)$, $(u',v')$, then deleting these two cut arcs and all the arcs
of $G$ and adding arcs $(r, v)$ and $(r, v')$\steven{,} where
$r$ is the unique vertex of $G$ whose children are $u$ and $u'$; \kelk{(v) deleting vertices
with in-degree zero and out-degree one.} (Note that if $N$ is a phylogenetic tree this
definition specializes to the usual definition of ``restriction'' used in the tree literature.) We
often write $N|_{X-x}$ as shorthand for $N|_{X - \{x\}}$.


\begin{theorem}\label{theo:countingClusters}
Let $n\geq 2$ and suppose $N\in \cL_1(n)$.
 Then $|\mathcal S(N)^-|=3n - 4 - c_N$. 
\end{theorem}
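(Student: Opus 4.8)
The plan is to prove this by induction on $n$, following the same case analysis used in the proof of Theorem~\ref{theo:gall-number}. First I would handle the base cases: if $n = 2$ then $N$ is a two-leaf tree and $\mathcal{S}(N)^- = \{\{x_1\},\{x_2\}\}$ has size $2 = 3\cdot 2 - 4 - 0$, while $c_N = 0$; if $n = 3$, then $N$ is either a triplet (with $c_N = 1$, giving $3\cdot 3 - 4 - 1 = 4$ clusters: three singletons plus one cherry cluster) or a simple level-1 network (with $c_N = 0$, giving $5$ clusters: three singletons, $X$, and one more, but $X$ is excluded so we need $3\cdot 3 - 4 - 0 = 5$ --- recount carefully here since the simple network on three leaves is degenerate under our convention that galls have $\geq 4$ vertices). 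Actually for $n=3$ the simple network needs checking against the convention; I would verify the formula evaluates correctly there before proceeding.

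For the inductive step with $n \geq 4$, suppose the formula holds for all networks in $\cL_1(n-1)$. If $N$ is a phylogenetic tree, then $|\mathcal{S}(N)^-| = |\mathcal{C}(N)| - 1 = 2n - 2$ and $c_N = n-2$ (the interior edges), so $3n - 4 - c_N = 3n - 4 - (n-2) = 2n - 2$, as required. So assume $N \in \cL_1(n)^-$. The key idea is to relate $\mathcal{S}(N)^-$ to $\mathcal{S}(N')^-$ where $N'$ is a smaller network obtained either by removing a leaf from a cherry, or by collapsing a gall with exactly three outgoing (trivial) cut arcs into a single leaf, or by removing a leaf hanging off a gall with $\geq 4$ outgoing arcs --- exactly the three reductions from Theorem~\ref{theo:gall-number}. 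In the cherry case $N' = N|_{X - x}$: removing $x$ from a cherry $\{x,y\}$ destroys exactly one cluster (the cherry $\{x,y\}$ itself, which can only arise as a cluster of $N$ through the split vertex parent of $x,y$; since $N$ is binary no other vertex yields $\{x,y\}$) and $c_{N'} = c_N - 1$, so $|\mathcal{S}(N)^-| = |\mathcal{S}(N')^-| + 1 = (3(n-1) - 4 - (c_N - 1)) + 1 = 3n - 4 - c_N$. For the gall-with-three-trivial-outgoing-arcs case, contracting $C$ together with its three trivial cut arcs into a leaf $x$ removes the clusters $C_N(v)$ for the (bounded number of) vertices $v$ of $C$ --- and by the $O(n)$ discussion preceding the theorem, each vertex of the gall contributes at most two clusters, but I would need to count precisely: the gall on three outgoing arcs has a fixed structure, and I expect it contributes exactly $3$ clusters to $\mathcal{S}(N)^-$ that vanish under contraction (one for each of the three "two-leaf" unions obtainable, mirroring the triplet base case), while the singleton $\{x\}$ is gained and $c_{N'} = c_N - 1$; checking that this arithmetic gives $3n - 4 - c_N$ from $3(n-1) - 4 - (c_N-1)$ requires the net change in cluster count to be exactly $2$.

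The main obstacle will be the bookkeeping in the gall cases: precisely determining how many distinct clusters a gall contributes to $\mathcal{S}(N)^-$, accounting for (a) the two possible clusters per internal gall vertex coming from the two choices of which edge into the hybrid vertex is kept, (b) possible coincidences between clusters from different vertices of the gall or with clusters coming from outside the gall, and (c) correctly tracking which of these clusters survive versus disappear under the reduction. In particular, in the case where $C$ has $\geq 4$ outgoing arcs and we delete a leaf $a$ hanging off $C$ (not adjacent to the hybrid vertex), I need to argue that exactly the right number of clusters are lost --- here $c_N = c_{N'}$ and $g(N) = g(N')$, so I need $|\mathcal{S}(N)^-| = |\mathcal{S}(N')^-|$, meaning deleting $a$ must not change the cluster count at all; this is plausible because the cluster $C_N(a') = \{a\}$ becomes $\{a\}$ as a singleton which is retained (wait --- $a'$ is suppressed, so $\{a\}$ survives as $L(a) = \{a\}$), and every other cluster of $N$ either loses the element $a$ or is unaffected, but one must check no two distinct clusters of $N$ collapse to the same cluster of $N'$. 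Carefully ruling out such collisions --- using the 4-outwards-independent structural facts about galls and the binary assumption --- is where the real work lies, and I would isolate it as a preliminary lemma describing exactly the clusters contributed by a single gall.
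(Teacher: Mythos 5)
Your base cases are fine, but the inductive step as written does not work: the arithmetic is wrong in all three of your reduction cases. In the cherry case the net loss is two clusters, not one (you lose the singleton $\{x\}$ as well as $\{x,y\}$); note also that the chain you wrote, $|\mathcal S(N')^-|+1=(3(n-1)-4-(c_N-1))+1$, equals $3n-5-c_N$, not $3n-4-c_N$, so your own equation does not close. In the case of a gall with three trivial outgoing arcs, contracting the gall to a single leaf reduces the number of leaves by \emph{two} (from $n$ to $n-2$), so the induction hypothesis gives $3(n-2)-4-(c_N-1)$ and the required net change is $5$ (the three singletons plus the two two-element softwired clusters coming from the gall), not $2$. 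In the case of a gall with at least four outgoing arcs, $n$ drops by one while $c_N$ is unchanged, so the formula you are proving forces $|\mathcal S(N)^-|=|\mathcal S(N')^-|+3$; your claim that ``deleting $a$ must not change the cluster count at all'' contradicts the statement itself. Finally, the collision-free ``exact contribution of a gall'' lemma that you correctly identify as the real work is left unproved, and it is precisely the delicate part of this route.

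For comparison, the paper's proof avoids almost all of this bookkeeping by splitting on whether $N$ has a non-trivial cut arc. If it does, say $e=(u,v)$, one cuts $N$ into $N_1$ (the part below $v$, on $Y_1$) and $N_2$ (the part above, on $Y_2\cup\{v\}$), observes that a tree is displayed by $N$ iff its restrictions are displayed by $N_1$ and $N_2$, and deduces $|\mathcal S(N)^-|=|\mathcal S(N_1)^-|+|\mathcal S(N_2)^-|$; the identity then follows from $n_1+n_2=n+1$ and $c_1+c_2=c_N-1$. Only when every cut arc is trivial (so $N$ is simple) does one do a local argument, and there a carefully chosen leaf (adjacent to a neighbour of the root on a side of the gall with at least two internal vertices) is removed, adding exactly three clusters. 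If you want to salvage your approach you would need to redo the counts as above and prove the gall-contribution lemma; the cut-arc decomposition is the cleaner path.
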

\begin{proof}
We prove the theorem by induction on $n\geq 2$.
Suppose $N\in\cL_1(n)$.
Then the stated equality holds if $n=2$ as then
$N$ is a phylogenetic tree on two leaves and if $n=3$
because in that case 
$N$ is either simple and so $c_N=0$ holds or $N$ is a triplet.
In the former case, $|\mathcal T(N)|= 2$ and both phylogenetic trees 
contained in $\mathcal T(N)$ are triplets. Thus, 
$|\mathcal S(N)^-|=5=3n - 4 - c_N$
holds in this case. In the latter case, $c_N=1$ follows and thus 
$|\mathcal S(N)^-|=4=3n - 4 - c_N$ in this case, too.

Now suppose $n>3$ 
and assume that
\textcolor{black}{the theorem holds for all networks $N'$ with at most $n-1$ leaves}.
\steven{Let} $X=L(N)$.
We distinguish between
the cases that every cut arc of $N$ is trivial and the case that $N$ contains at least one non-trivial cut arc.

Suppose first that every cut arc of $N$ is trivial. Then $c_N = 0$ 
and $N$ is simple. Note that since $n>3$, at least one 
of the two directed paths  
from the root $\rho_N$ to the hybrid vertex $h_N$ of $N$ must 
contain at least two vertices distinct 
from $\rho_N$ and $h_N$. Let $P_1$ denote
such a path. Moreover, let $v\in V(P_1)$ denote the vertex on 
$P_1$ that is adjacent
with $\rho_N$  and let $x\in X$ denote the leaf of $N$ that is adjacent
with $v$. \steven{Let} $X'=X-\{x\}$ and $N'=N|_{X'}$.
%
Clearly $N'\in\cL_1(n-1)$ and
$c_{N'}=c_N=0$. Thus, 
$|\mathcal S(N')^-|=3(n-1) - 4$, by \steven{the} induction hypothesis. 
Observe that the definition of $S(N)$ implies that
$S(N)^{-}$ contains exactly three clusters that
$S(N')^{-}$ does not. Indeed, in case the other directed 
path from $\rho_N$ to $h_N$  also contains
vertices distinct from $\rho_N$ and $h_N$, the three clusters missing
from $S(N')^{-}$ are 
$\{x\}$, $C_N(v) \setminus \{h\}$ and $C_N(v)$, where $h$ is the leaf below
$h_{N}$. Otherwise, the 
three clusters missing from $S(N')^{-}$
 are $\{x\}$, $C_N(v) \setminus \{h\}$ and $C_{N}(v')$ where
$v'$ is the child of $v$ that is not contained in $X$.
Consequently, 
$|\mathcal S(N)^-|=|\mathcal S(N')^-| +3= 3(n-1) - 4-0+3=3n-4-c_N$
holds in this case.

Now suppose that $N$ has a non-trivial cut arc $e=(u,v)$. \steven{Let} 
$Y_1=\{l\in X: l \mbox{ is below } v\}$ and $Y_2=X-Y_1$. Note that 
$2\leq |Y_1|<n$. Hence, $1\leq |Y_2|\leq n-2$. 
Consider the rooted DAG $N_1$ with leaf set $Y_1$ obtained from $N$ by
deleting all vertices (plus their incident arcs) that are not below $v$
and the rooted DAG $N_2$ on $Y_2\cup\{v\}$
obtained from $N$ by deleting 
all vertices below $v$ (plus their incident arcs).
Since $|Y_1|\geq 2$ it follows that
$N_1\in\cL_1(Y_1)$
and since $|Y_2\cup\{v\}|\geq 2$, we have that $N_2\in\cL_1(Y_2\cup\{v\})$.
Note that a phylogenetic tree $T$ is displayed by $N$
if and only if $T|_{Y_1}$ is displayed by $N_1$ and $T|_{Y_2\cup\{v\}}$
is displayed by $N_2$. Consequently,
$\mathcal S(N)^-=\mathcal S(N_1)^-
\stackrel{\cdot}{\cup}\{C\in \mathcal S(N_2)^-\,:\, v\not \in C\}
\stackrel{\cdot}{\cup}\{C-\{v\}\cup Y_1\,:\, v\in C \mbox{ and } 
C\in \mathcal S(N_2)^-\}$ must hold. Thus, 
 $$
 |\mathcal S(N)^-|=|\mathcal S(N_1)^-|+ |\mathcal S(N_2)^-|.
 $$
 
Let $i=1,2$ and \steven{let} $n_i=|L(N_i)|$ and $c_i=c_{N_i}$. Then
$|\mathcal S(N_i)^-|=3n_i - 4 - c_i$, by \steven{the} induction hypothesis. Consequently,
$|\mathcal S(N)^-|=3n_1 - 4 -c_1 + 3n_2 - 4 - c_2$. Since
$n_1 + n_2 = n + 1$ and
$c_1 + c_2 = c_N - 1$ it follows that
$
|\mathcal S(N)^-| =3(n + 1) - 8 - (c_N - 1) = 3n - 4 - c_N,
$
holds in this case, too.
\end{proof}


\subsection{Counting triplets}\label{counting-triplets}
In view of the close relationship 
between the cluster system $\mathcal C(T)$ induced by a  phylogenetic
tree $T$ on at least three leaves 
and the triplet system $\mathcal R(T)$ induced by $T$
(see e.\,g.\, \cite{DHKMS12} or  \cite{vIK11b})
it is reasonable to hope that the 
companion result to Theorem~\ref{theo:countingClusters}
might hold for the triplet system $\mathcal R(N)$ induced by a phylogenetic
network $N$ on at least three leaves. Put differently,
it should be possible to express the size of $\mathcal R(N)$
in terms of the number of galls and non-trivial cut arcs of $N$. As the
next result \steven{shows}, this is not the case. We start with defining the
triplet system $\mathcal R(N)$.


Suppose $N\in\cL_1(X)$\steven{,} where $|X|\geq 3$ and
$a$, $b$, and $c$ are distinct elements in $X$. Then the
triplet $ab|c$ is said to be {\em consistent} with $N$ 
if there exist distinct vertices
$v$ and $w$ in $N$ and directed paths in $N$ from $v$ to $c$ and $w$,
respectively, and from $w$ to $a$ and $b$, respectively, such that any pair
of those paths does not have an interior vertex in common.
The triplet system $\mathcal R(N)$ is then the set of all triplets $t$
with $L(t)\subseteq X$ that are consistent with $N$.

 To illustrate this definition consider the simple level-1 network $N_2$  
on $X=\{x_1,\ldots,x_5\}$
depicted in Fig.~\ref{fig:simple}(ii). Then 
$\mathcal R(N_2)$ comprises the sixteen triplets $x_3|x_1x_2$, $x_4|x_1x_2$,
 $x_5|x_1x_2$, $x_1|x_3x_4$, $x_4|x_1x_3$, $x_1|x_3x_5$, 
 $x_5|x_1x_3$, $x_1|x_4x_5$, $x_2|x_3x_4$, $x_4|x_2x_3$,
  $x_2|x_3x_5$, $x_5|x_2x_3$, $x_2|x_4x_5$,
$x_3|x_4x_5$, $x_5|x_3x_4$, $x_1|x_2x_3$.


\begin{proposition}\label{prop:triplet-system-size}
For all $n\geq 6$\steven{,} there exist distinct 
networks $N_1,N_2\in \cL_1(n)$ with 
the same number of galls
and non-trivial cut arcs but $|\mathcal R(N_1)|\not=|\mathcal R(N_2)|$.
\end{proposition}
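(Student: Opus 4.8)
The plan is to exhibit, for each $n \geq 6$, two explicit binary level-1 networks $N_1, N_2 \in \cL_1(n)$ that agree on their number of galls and non-trivial cut arcs yet differ in the size of their induced triplet systems. Since Proposition~\ref{prop:triplet-system-size} only asserts existence, it suffices to work with networks having a single gall (so $g(N_1) = g(N_2) = 1$) and the same number of non-trivial cut arcs, and to show that the triplet count depends on something finer than these two invariants --- namely, on how the leaves are distributed around the cycle and on where the pendant subtrees attach. First I would construct both networks to have the same ``gall shape'' but arrange the non-gall leaves differently: for instance, let $N_1$ be obtained by taking a simple level-1 network on a small set of leaves placed around one gall and then hanging a caterpillar (path-like tree) of the remaining leaves off of one chosen outgoing arc of the gall, while $N_2$ hangs the same caterpillar --- or splits it into two balanced pieces --- off of different outgoing arcs. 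The bookkeeping is then to verify that both choices yield the same value of $c_N$ (this is a purely structural count: the non-trivial cut arcs are the interior edges of the pendant subtrees plus the arcs joining those subtrees to the gall, and choosing \emph{where} to attach a fixed multiset of subtrees does not change their total number).

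The key step is the triplet count. I would compute $|\mathcal R(N_i)|$ by partitioning the triplets $\{a,b,c\}$ according to how the three leaves are distributed among the ``blocks'' hanging off the gall versus sitting directly on the gall. For a triple whose three leaves lie below three distinct outgoing arcs of the gall, the gall contributes one or two consistent triplets depending on the cyclic positions of those arcs relative to the hybrid vertex (as the worked example $\mathcal R(N_2)$ in the excerpt already illustrates: some triples give a single triplet, others give two). For triples with two leaves inside the same pendant subtree and one outside, exactly one triplet is induced, governed by the tree structure. Hence $|\mathcal R(N_i)|$ splits as a sum of a ``tree part'' (from triples confined to a single pendant subtree, which is the same for both networks if the subtree multiset is the same) plus a ``gall-interaction part'' that counts, with multiplicity one or two, triples straddling the gall; and it is precisely this gall-interaction part that is sensitive to the \emph{placement} of the subtrees around the cycle. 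By choosing $N_1$ and $N_2$ so that in $N_1$ the heavy subtrees sit at cyclic positions where every straddling triple gives two triplets, while in $N_2$ one heavy subtree is moved to a position forcing some straddling triples down to a single triplet, we get $|\mathcal R(N_1)| > |\mathcal R(N_2)|$ while all coarser invariants match.

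Concretely I would likely fix a small base: take a four-leaf simple network (the one in Fig.~\ref{fig:new}-style, four vertices on the cycle) and attach a caterpillar on $n-3$ leaves in $N_1$ to the outgoing arc ``opposite'' the hybrid vertex, and in $N_2$ attach (say) a caterpillar on $n-4$ leaves plus one extra pendant leaf at a different cyclic slot; then check $n \geq 6$ is exactly the threshold that gives enough leaves for the two configurations to be genuinely non-equivalent and for the triplet counts to diverge. The main obstacle I anticipate is not any single hard idea but the combinatorial care needed in the case analysis that assigns to each cyclic arrangement of three outgoing-arc positions the number of consistent triplets the gall produces (one versus two), and then summing these over all straddling triples for each of the two networks cleanly enough to read off a strict inequality --- essentially a careful but elementary counting argument that has to be organized so the two sums are transparently comparable.
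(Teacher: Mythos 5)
Your overall strategy --- exhibit two networks with one gall each and the same number of non-trivial cut arcs, then count triplets by deciding which $3$-sets of leaves support two triplets instead of one --- is exactly the strategy of the paper (its two networks in Fig.~\ref{FigTripletSystemSize} realize extra counts of $1$ versus $|X'|+1$ on top of the common $\binom{n}{3}$). But two concrete steps in your sketch would fail as written. First, your criterion for when a straddling triple contributes two triplets is off: it does not depend on the cyclic positions of the three outgoing arcs, but only on whether exactly one of the three leaves lies below the outgoing cut arc of the hybrid vertex while the other two lie below two distinct other outgoing arcs of the gall (the worked example $\mathcal R(N_2)$ in Section~\ref{counting-triplets} already shows this: the doubled $3$-sets are precisely those containing the hybrid leaf $x_3$). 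Hence there is no placement in which ``every straddling triple gives two triplets'', and the extra count is $|A_h|\cdot\sum_{i<j}|A_i||A_j|$, where $A_h$ is the leaf set below the hybrid arc and the $A_i$ are the leaf sets below the other outgoing arcs; in particular, if the gall has only three outgoing arcs this is just the product of the three block sizes, so permuting a fixed multiset of pendant subtrees around such a gall changes nothing. Second, your concrete pair (a caterpillar on $n-3$ leaves in $N_1$ versus a caterpillar on $n-4$ leaves plus one extra pendant leaf in a new slot in $N_2$) does not satisfy the hypothesis of the proposition: under the natural reading, the first configuration contributes $n-4$ non-trivial cut arcs (the $n-5$ interior arcs of the caterpillar plus its attachment arc), the second only $n-5$, so the two networks do not have the same number of non-trivial cut arcs.

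Both defects are repairable, and the repair essentially reproduces the paper's proof: keep the multiset of pendant subtrees fixed, use a gall with at least four outgoing arcs (or park some leaves outside the gall, as the paper does with the leaf $d$ and the tree $T$), and move the heavy subtree between the hybrid position and a non-hybrid position. For block sizes $(n-3,1,1,1)$ this gives extra counts $3(n-3)$ versus $2(n-3)+1$, which differ for all $n\geq 6$, while the number of galls ($1$) and the number of non-trivial cut arcs are unchanged because only the attachment point, not the multiset of attached subtrees, is altered.
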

\begin{proof}
Let $X'$ denote a finite set of size at least two
and let $a$, $b$, $c$, and $d$ denote pairwise 
distinct elements not contained in $X'$.
Consider the binary level-1 networks $N_1$ and $N_2$ on 
$X:=X'\cup\{a,b,c,d\}$ depicted in 
Fig.~\ref{FigTripletSystemSize}\steven{,}
where the triangle marked $T$ denotes some binary phylogenetic 
tree on $X'$.

As can be easily checked, $N_1$ and $N_2$ have the 
same number of leaves and both contain one gall and have 
$c_{T}+3$ non-trivial cut arcs.
Moreover, for any $3$-set $Y\in {X\choose 3}$\steven{,} there
exists exactly one triplet on $Y$ that is contained in $\cR(N_1)$ 
except for $Y=\{a,b,c\}$ for which 
$a|bc, c|ab\in  \cR(N_1)$ holds. Hence,
 $|\cR(N_1)|=\binom{n}{3}+1$\steven{,} where $n=|X|$.  Similarly
 for every 3-subset $Y\in {X\choose 3}$\steven{,} there
exists exactly one triplet on $Y$ that is contained in $\cR(N_2)$ 
 except for $Y=\{a,c,x\}$ with $x \in X'\cup \{b\}$ for which
$a|cx,c|ax\in\cR(N_2)$ holds. Consequently,
$|\cR(N_2)|=\binom{n}{3}+1+|X'|>\binom{n}{3}+1= |\cR(N_1)|$.

\begin{figure}[!ht]
\begin{tabular}{cc}
\includegraphics[scale=0.4] {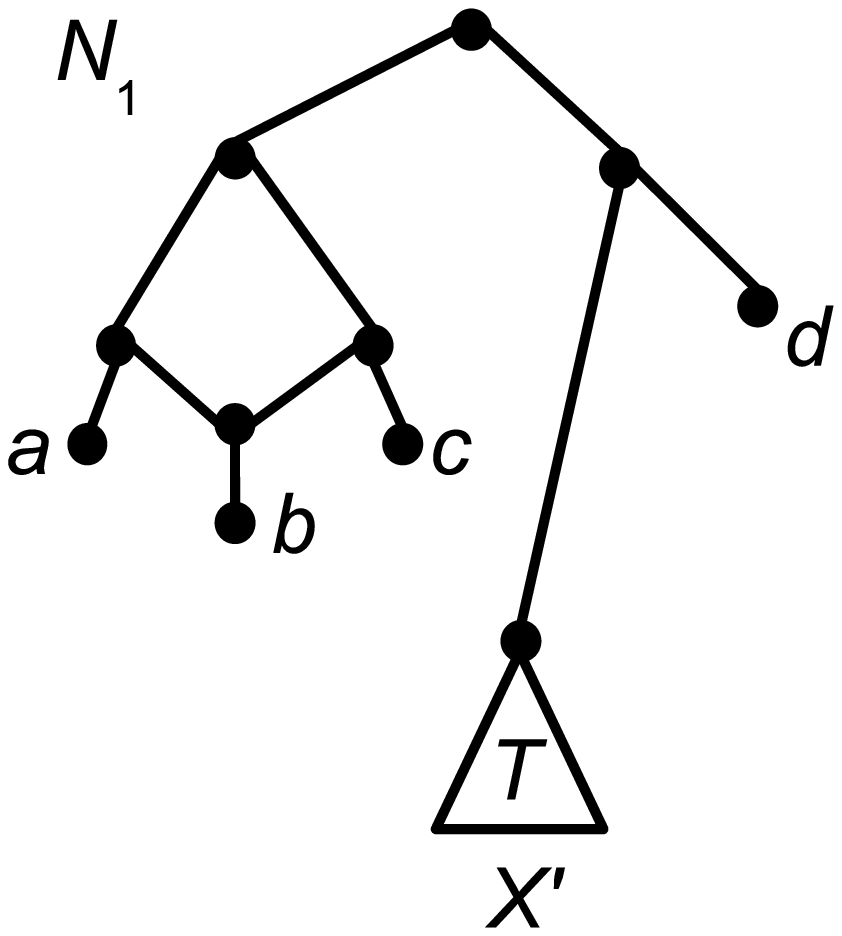} & 
\includegraphics[scale=0.4] {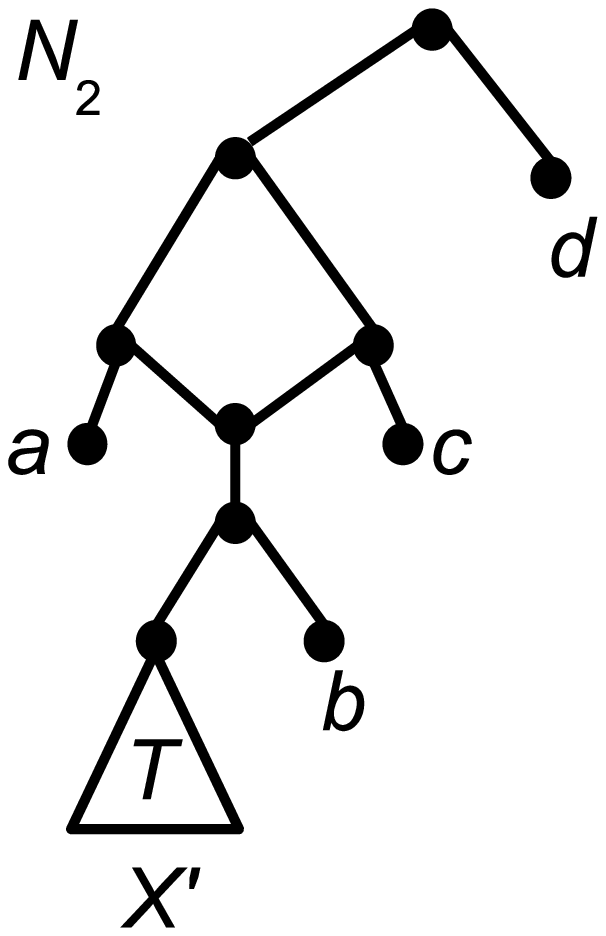}\\
\end{tabular}
\caption{Two binary level-1 networks $N_1$ and $N_2$ on 
$X'\cup\{a,b,c,d\}$ for which the respective 
number of leaves, galls, and non-trivial cut arcs are the same yet 
$|\cR(N_2)|\not=|\cR(N_1)|$ -- see the proof of 
Proposition~\ref{prop:triplet-system-size} for details.}
\label{FigTripletSystemSize}
\end{figure}
\end{proof}

\section{Triplet systems and the partition $Cut(N)$}
\label{sec:triplets-sn}

In this section, we start turning our attention to the question of how many
triplets suffice to uniquely determine a binary level-1 network. 
Central to our arguments will be a special type of subsets of $X$ called
SN-sets which were originally introduced in \cite{JNS06} and further studied in,
for example, \cite{IKKSHB09,vIK11a}.

Suppose $|X|\geq 3$ and $\mathcal R$ is a triplet system on $X$. Then a subset
$S\subseteq X$ is called an {\em SN-set} of $\mathcal R$ if there is no triplet 
$xy|z\in\mathcal R$ with $x, z \in S$ and $y \not \in S$. In addition, such a 
set $S$ is called {\em non-trivial} if $S\not=X$.\footnote{\steven{There is some confusion in earlier literature whether $\emptyset$ should be
considered an SN-set. Here we allow this, as it does not adversely affect our analysis. Although it sounds a little strange, $\emptyset$ is also a non-trivial SN-set. We adopt this convention to ensure consistency with earlier publications: $X$ is the only trivial SN-set.}} \steven{Last but not least}, a \textcolor{black}{non-trivial} SN-set
$S$ for $\mathcal R$ is called {\em maximal} if there is no 
non-trivial SN-set that is 
a strict superset of $S$.

As it turns out, for a binary network $N$ (of any level) 
the SN-sets of $\mathcal R(N)$ are closely related to the cut arcs of $N$
in the sense that if $(u,v)$ is a cut arc of $N$, then 
$C_N(v)$ is an SN-set of $\mathcal R(N)$ because there cannot 
exist a triplet $xy|z \in \mathcal R(N)$ such that $x,z\in C_N(v)$ 
and $y\not\in C_N(v)$. \kelk{We call a cut arc $(u,v)$ of $N$ \emph{highest} 
if there does not exist a cut arc $(u', v')$ of $N$ such that there 
is a directed path from $v'$ to $u$.} \kelk{We denote by $Cut(N)$ the partition 
of $X$ induced by, for each highest cut arc $(u,v)$ of $N$, taking the 
cluster $C_N(v)$ of $X$.} \kelk{By \cite[Observation 3]{vIK11a}, 
$Cut(N)$ is exactly the set of maximal SN-sets of $\mathcal R(N)$.}

 To illustrate, consider the network $N_1$ on 
$X=X'\cup\{a,b,c,d\}$ depicted in 
Fig.~\ref{FigTripletSystemSize}.
Then $Cut(N_1)$ is the bipartition $\{\{a,b,c\},X' \cup \{d\}\}$.

We begin with an auxiliary observation which relies on the concept of
compatibility of pairs of sets, whereby two non-empty finite sets $A$ and $B$
are called {\em compatible} if $A\cap B\in \{\emptyset,A,B\}$ holds
and {\em incompatible} otherwise. More generally, a cluster system $\mathcal C$ 
is called {\em compatible} if any two clusters in $\mathcal C$ are compatible
and {\em incompatible} otherwise
(see e.\,g.\,\cite[Section 3.5]{SS03} and \cite{DHKMS12}
for more on such objects).

\begin{observation}
\label{obs:twocut}
Suppose that $n\geq 3$ and that $N$ and $N'$ are two networks in 
$\cL_1(n)$ such
that $\mathcal R(N) \subseteq \mathcal R(N')$. Let $v\in V(N)$
and $v'\in V(N')$ denote two \steven{split} vertices that are heads of cut arcs 
of $N$ and $N'$, respectively. Then the induced clusters
$C_N(v)$ and $C_{N'}(v')$ are compatible.
In particular, if $C_{N}(v) \subsetneq C_{N'}(v')$ then
$C_N(v)$ is not a maximal SN-set for $\mathcal R(N)$.
\end{observation}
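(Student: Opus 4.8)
The plan is to prove the two assertions in turn, the second being an easy consequence of the first together with the definition of SN-set and the fact (quoted from \cite{vIK11a}) that $Cut(N)$ is the set of maximal SN-sets of $\mathcal R(N)$.

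\smallskip

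\noindent\textbf{Compatibility of $C_N(v)$ and $C_{N'}(v')$.} Write $A=C_N(v)$ and $B=C_{N'}(v')$, and suppose for contradiction that $A$ and $B$ are incompatible. Then all three of the sets $A\setminus B$, $B\setminus A$ and $A\cap B$ are non-empty, so we may pick $a\in A\cap B$, $p\in A\setminus B$ and $q\in B\setminus A$. The idea is to extract from these three leaves a triplet that lies in $\mathcal R(N)$ but cannot lie in $\mathcal R(N')$, contradicting $\mathcal R(N)\subseteq\mathcal R(N')$. First I would argue about $N$: since $(\cdot,v)$ is a cut arc of $N$ and $v$ is a split vertex, $C_N(v)=A$ is an SN-set of $\mathcal R(N)$ (as recalled just before the Observation); moreover $N$ restricted appropriately still induces at least one triplet on $\{a,p,q\}$ (every binary level-1 network induces a triplet on every three of its leaves, as noted in the introduction), and because $a,p\in A$ while $q\notin A$ and $A$ is an SN-set, the only triplet on $\{a,p,q\}$ that $\mathcal R(N)$ can contain with $a$ or $p$ grouped with $q$ is excluded; hence $ap|q\in\mathcal R(N)$. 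Symmetrically, using that $B=C_{N'}(v')$ is an SN-set of $\mathcal R(N')$, any triplet on $\{a,p,q\}$ in $\mathcal R(N')$ must have $a$ and $q$ together, i.e.\ $\mathcal R(N')\cap\{ap|q, aq|p, pq|a\}\subseteq\{aq|p\}$. But then $ap|q\in\mathcal R(N)\subseteq\mathcal R(N')$ forces $ap|q=aq|p$, which is absurd. Therefore $A$ and $B$ are compatible.

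\smallskip

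\noindent\textbf{The ``in particular'' clause.} Now suppose $C_N(v)\subsetneq C_{N'}(v')$; I must show $C_N(v)$ is not a maximal SN-set of $\mathcal R(N)$. By \cite[Observation 3]{vIK11a}, the maximal SN-sets of $\mathcal R(N)$ are exactly the parts of $Cut(N)$. If $C_N(v)$ were a maximal SN-set, it would be one of these parts, and since $\mathcal R(N)\subseteq\mathcal R(N')$ every maximal SN-set of $\mathcal R(N')$, in particular each part of $Cut(N')$, is a union of parts of $Cut(N)$ or at least is an SN-set of the coarser system; more directly, one observes that $C_{N'}(v')$ is itself an SN-set of $\mathcal R(N')$ hence of $\mathcal R(N)$ (no $xy|z\in\mathcal R(N)\subseteq\mathcal R(N')$ violates it), and it strictly contains $C_N(v)$, so $C_N(v)$ admits a strictly larger SN-set of $\mathcal R(N)$ — unless $C_{N'}(v')=X$. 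If $C_{N'}(v')=X$ then $v'$ would be the root of $N'$, contradicting that $v'$ is the head of a cut arc (the root has in-degree $0$). Hence $C_{N'}(v')\neq X$, so $C_N(v)\subsetneq C_{N'}(v')\subsetneq$ possibly $X$ gives a strictly larger non-trivial SN-set, and $C_N(v)$ is not maximal.

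\smallskip

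\noindent\textbf{Main obstacle.} The delicate point is the claim, for a single network, that $a,p\in C_N(v)$, $q\notin C_N(v)$ and $C_N(v)$ being an SN-set of $\mathcal R(N)$ together force \emph{exactly} the triplet $ap|q$ to be present (rather than no triplet on $\{a,p,q\}$ at all). Here I would invoke carefully the fact stated in the introduction that any binary level-1 network contains at least one triplet for any three of its leaves, so some triplet on $\{a,p,q\}$ is in $\mathcal R(N)$; the SN-property of $C_N(v)$ then eliminates $aq|p$ and $pq|a$, leaving $ap|q$. The same care is needed on the $N'$ side. Everything else is bookkeeping with the definition of compatibility and the cited characterisation of $Cut(N)$.
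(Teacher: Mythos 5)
Your proof is correct and follows essentially the same route as the paper's: you pick one leaf from each of $C_N(v)\cap C_{N'}(v')$, $C_N(v)\setminus C_{N'}(v')$ and $C_{N'}(v')\setminus C_N(v)$, use the SN-set property of $C_N(v)$ together with the fact that a binary level-1 network induces at least one triplet on any three leaves to force the triplet grouping the two $C_N(v)$-elements, and then contradict the SN-set property of $C_{N'}(v')$; the ``in particular'' clause is likewise handled exactly as in the paper, by noting that $C_{N'}(v')$ is an SN-set of $\mathcal R(N')$ and hence of $\mathcal R(N)$. One small caveat: your side-remark that $C_{N'}(v')=X$ would force $v'$ to be the root is not a valid inference (the root sits \emph{above} $v'$, so $v'$ being the head of a cut arc does not immediately rule this out); the conclusion $C_{N'}(v')\neq X$ is nevertheless true, because the component on the root side of the cut arc must contain a leaf of $N'$ not below $v'$ --- a point the paper's own proof also leaves implicit.
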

\begin{proof}
\steven{Let} $C=C_N(v)$ and $C'=C_{N'}(v')$. 
Clearly, if $C=C'$ then $C$ and $C'$ are compatible.
So suppose $C\not=C'$. Assume for the sake of contradiction that
$C$ and $C'$ are not compatible, that is, 
$C\cap C'\not\in \{\emptyset,C,C'\}$. Choose elements
$x \in C \setminus C'$, $y \in C \cap C'$ 
and $z \in C' \setminus C$. Then, out of the three possible
triplets with leaf set $\{x,y,z\}$, only the triplet 
 $xy|z$ can be contained in $\mathcal R(N)$. Hence, $xy|z\in\mathcal R(N')$
and, so, $C'$ cannot be an SN-set of $\mathcal R(N')$; a contradiction
as the incoming arc of $v'$ is a cut arc of $N'$ and, so, 
$C'$ must be an SN-set of 
$\mathcal R(N')$. Thus, $C$ and $C'$ must be compatible.

To see the remainder of the observation, assume that
$C\subsetneq C'$. Then since 
$\mathcal R(N) \subseteq \mathcal R(N')$ and $C'$ is an
SN-set of $\mathcal R(N')$, it follows that 
$C'$ is also an SN-set of $\mathcal R(N)$. Since $C$ is also an 
SN-set of $\mathcal R(N)$,
it cannot be a maximal SN-set of $\mathcal R(N)$.
\end{proof}

The next result will be required by the induction argument 
that we will use in the proof of Theorem~\ref{theo:shapely-saturated-triplet}.
\kelk{The proof of the proposition}
relies on the facts that 
for any saturated network $N\in\cL_1(X)$ (i) the partition $Cut(N)$ contains
at least three elements and (ii) there exists a gall $B$ of $N$
such that the root of $N$ is a vertex of $B$.

\begin{figure}[!ht]
\begin{tabular}{cc}
\includegraphics[scale=0.4] {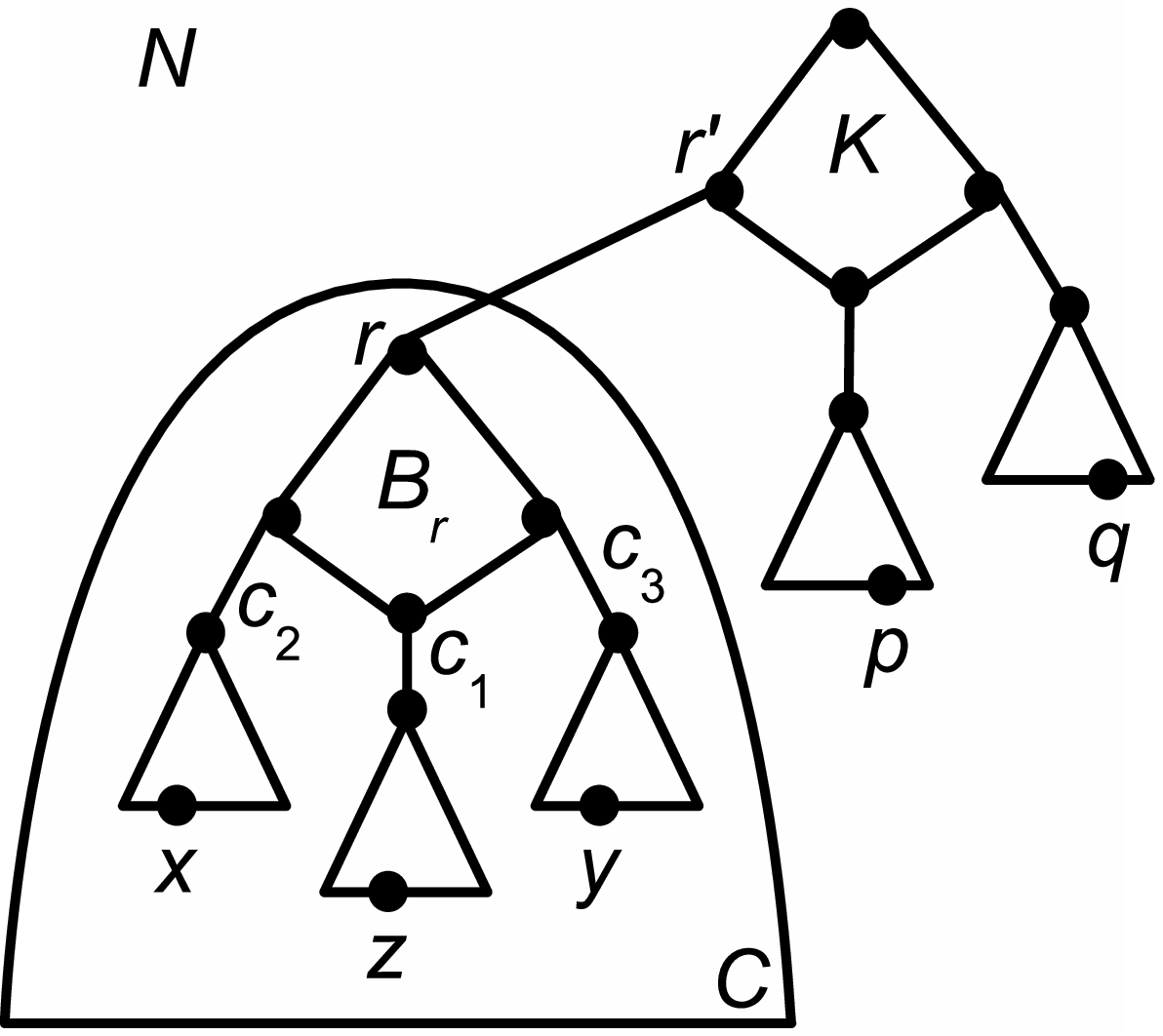} & 
\hspace{1cm}
\includegraphics[scale=0.4] {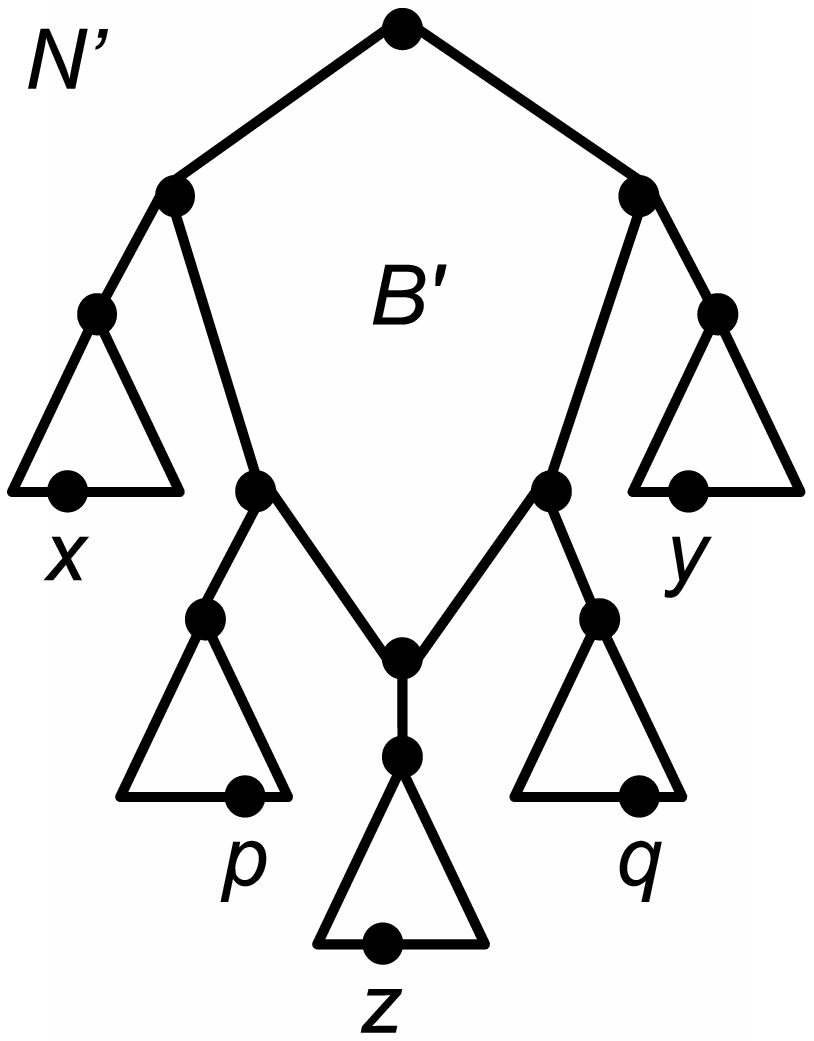}\\
(i) & (ii)
\end{tabular}
\caption{The structure of networks \steven{(i)} $N$ and \steven{(ii)} $N'$ considered in the
proof of Proposition \ref{prop:samecut}.}
\label{fig:prop51}
\end{figure}

\begin{proposition}
\label{prop:samecut}
Suppose  that $|X|\geq 3$, that  
$N$ is saturated network in $\cL_1(X)$ and that $N'$ is
a network in $\cL_1(X)$ 
such that $\mathcal R(N) \subseteq \mathcal R(N')$. Then $Cut(N)=Cut(N')$.
\end{proposition}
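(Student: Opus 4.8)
The plan is to argue that $N$ and $N'$ induce the same partition of $X$ into maximal SN-sets, exploiting the fact (stated just before the proposition) that for a saturated network $N$ the partition $Cut(N)$ has at least three blocks and that some gall $B$ of $N$ contains the root of $N$. First I would recall that, by \cite[Observation 3]{vIK11a}, $Cut(N)$ and $Cut(N')$ are precisely the sets of maximal SN-sets of $\mathcal R(N)$ and $\mathcal R(N')$, respectively; since $\mathcal R(N)\subseteq\mathcal R(N')$, every SN-set of $\mathcal R(N')$ is automatically an SN-set of $\mathcal R(N)$. So one inclusion of each kind comes almost for free: each block $C'$ of $Cut(N')$ is an SN-set of $\mathcal R(N)$, hence is contained in some block of $Cut(N)$; the real work is to upgrade these containments to equalities.

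The key step is to look at a block $S$ of $Cut(N)$ and show it is a maximal SN-set of $\mathcal R(N')$. Write $S=C_N(v)$ for the head $v$ of a highest cut arc of $N$; note $v$ is a split vertex of $N$ (its incoming cut arc makes it non-hybrid in the relevant sense), so Observation~\ref{obs:twocut} applies to $S$ against any cluster $C_{N'}(v')$ where $v'$ is a split vertex of $N'$ that is the head of a cut arc. Consider the maximal SN-set $T'$ of $Cut(N')$ that contains $S$ (such a $T'$ exists since $S$, being an SN-set of $\mathcal R(N')$ too? — careful: $S$ need not be an SN-set of the larger system, so instead take $T'$ to be a block of $Cut(N')$ meeting $S$). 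If $S\subsetneq T'$, then by the second part of Observation~\ref{obs:twocut} $S$ is not a maximal SN-set of $\mathcal R(N)$, a contradiction. The remaining possibility is that $T'$ cuts across $S$, i.e. $S$ and $T'$ are incompatible; but that contradicts the compatibility conclusion of Observation~\ref{obs:twocut}. Hence $T'\subseteq S$, and combined with the earlier easy inclusion ($T'$, as a block of $Cut(N')$, sits inside some block of $Cut(N)$, which must then be $S$ itself since $T'$ meets $S$), we get that $Cut(N')$ refines $Cut(N)$, with each block of $Cut(N')$ contained in a unique block of $Cut(N)$.

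To finish I would rule out proper refinement. Suppose some block $S$ of $Cut(N)$ is split into at least two blocks $T'_1,T'_2,\dots$ of $Cut(N')$. Here is where the structural hypotheses on $N$ (saturated, so the root lies on a gall $B$, and $|Cut(N)|\geq 3$) and the existence of witnessing triplets get used: pick leaves $x,y\in S$ lying in different $T'_i$, and a leaf $z$ in a different block of $Cut(N)$. I would produce a triplet of $\mathcal R(N)$ with leaf set $\{x,y,z\}$ of the form $xy|z$ — this holds because $x$ and $y$ are both below the highest cut arc heading $S$, so their last common ancestor in $N$ is strictly below the root, while $z$ is reached on a path that separates off at or above that cut arc; one has to check the disjoint-paths condition defining triplet-consistency, which is the routine-but-fiddly part. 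Then $xy|z\in\mathcal R(N)\subseteq\mathcal R(N')$ forces $x,y$ to lie in a common SN-set of $\mathcal R(N')$ (namely the SN-set generated by $\{x,y\}$), contradicting that $x\in T'_1$, $y\in T'_2$ are in distinct blocks of $Cut(N')$. Therefore no block is split, the refinement is trivial, and $Cut(N)=Cut(N')$.

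The main obstacle I anticipate is the last step: carefully exhibiting, for a saturated level-1 network, the triplet $xy|z$ witnessing that any two leaves in the same block of $Cut(N)$ cannot be separated — this needs the precise definition of ``highest cut arc'' together with the fact that the root lies on a gall (so that any $z$ outside the block is genuinely reachable by a path vertex-disjoint from paths into $x$ and $y$), and it is exactly the point where saturation (rather than mere $4$-outwardness) is essential. The compatibility bookkeeping via Observation~\ref{obs:twocut} is straightforward by comparison.
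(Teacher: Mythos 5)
Your first two steps are sound and essentially reproduce the opening of the paper's argument: compatibility of blocks of $Cut(N)$ with blocks of $Cut(N')$ via Observation~\ref{obs:twocut}, together with the fact that every SN-set of $\mathcal R(N')$ is automatically an SN-set of $\mathcal R(N)$, does show that each block of $Cut(N')$ is contained in a block of $Cut(N)$, i.e.\ that $Cut(N')$ refines $Cut(N)$. The gap is in your final step, where you rule out proper refinement. From $x\in T'_1$, $y\in T'_2$ (distinct blocks of $Cut(N')$) and $xy|z\in\mathcal R(N)\subseteq\mathcal R(N')$ you conclude that $x$ and $y$ must lie in a common non-trivial SN-set of $\mathcal R(N')$, contradicting their separation. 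That inference is false: the triplet $xy|z$ only forbids sets that contain $z$ and exactly one of $x,y$ from being SN-sets; it in no way prevents the SN-set generated by $\{x,y\}$ in $\mathcal R(N')$ from being the trivial set $X$. Indeed, if $N'$ is a simple level-1 network then every block of $Cut(N')$ is a singleton, yet $\mathcal R(N')$ is full of triplets of the form $xy|z$, so no contradiction is obtained.

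There is decisive evidence that this step cannot be patched at the level of generality you work at: your construction of the witnessing triplet $xy|z$ uses only that $x,y$ lie below a common highest cut arc of $N$ while $z$ does not (so that the SN-set property of that block excludes the other two triplets); it does not genuinely use saturation. Hence, if your final step were valid, the proposition would hold without the saturation hypothesis, and it does not: take $N$ to be the phylogenetic tree on $X=\{x,y,z\}$ with cherry $\{x,y\}$ and $N'$ the simple level-1 network on $X$ whose hybrid vertex is the parent of $z$; then $\mathcal R(N)=\{xy|z\}\subseteq\mathcal R(N')$, yet $Cut(N)=\{\{x,y\},\{z\}\}$ while $Cut(N')$ consists of three singletons. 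This is precisely why the paper's proof is considerably more involved at this point: using saturation it extracts, for a putative $C\in Cut(N)\setminus Cut(N')$, triples of leaves on which $\mathcal R(N)$ contains \emph{two distinct} triplets (exploiting both the gall containing the head of the cut arc above $C$ and the topmost gall of $N$), and then analyses the topmost gall of $N'$ and the placement of the chosen leaves among the blocks of $Cut(N')$ to show that two distinct triplets on such a triple cannot both be consistent with $N'$. An argument of that strength (two triplets per triple, not one) is what you would need to complete your plan.
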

\begin{proof}
\steven{The proof contains multiple parts so we first describe it at a high level, and then give details. The entire proof is devoted to proving that $Cut(N)\subseteq Cut(N')$, and after this $Cut(N) = Cut(N')$ follows
immediately from the fact that $Cut(N)$ and $Cut(N')$ are both partitions of $X$. The proof that
$Cut(N)\subseteq Cut(N')$ holds is a long proof by contradiction, which starts with the
assumption that there exists some  $C\in Cut(N)-Cut(N')$. 
We then show that $|C|\geq 2$
must hold. Combined with the assumption that $N$ is saturated
we then infer that, up to symmetry, the structure of $N$
is as indicated in Fig.~\ref{fig:prop51}(i). Choosing elements
$x,p,q\in X$ as described below we obtain that  $\mathcal R(N)$
must contain two distinct triplets $t_1$ and $t_2$ with leaf set
$\{x,p,q\}$. By examining the structure of $Cut(N')$ we
identify that at least one of two cases, referred to as Cases (i) and (ii) below,
must hold. However, we show that Case (i) cannot hold, and thus conclude
that Case (ii) must hold. We then show that, up to symmetry,
the structure of $N'$ is as indicated  in Fig.~\ref{fig:prop51}(ii). We argue that $x, p, q$ are below three distinct highest cut arcs of $N'$, and that none of these are
the cut arc incident to the hybridization vertex of $B'$ (where $B'$ is the topmost gall of $N'$, as shown in
Fig.~\ref{fig:prop51}(ii)).
This implies that 
$\mathcal R(N')$ cannot contain both $t_1$ and $t_2$
which finally yields the required contradiction.\\
\\
Let us then start by assuming, for the sake of contradiction, that
there exists some  $C\in Cut(N)-Cut(N')$.}\\
\\
{\em \steven{Proof that $|C|\geq 2$:}} Since both $Cut(N)$
and $Cut(N')$ are partitions of $X$, there exists some $C'\in Cut(N')$
distinct from $C$ such that $C\cap C'\not=\emptyset$. 
Since, in view of \cite[Observation 3]{vIK11a}
recalled above, $C$ is a maximal SN-set of  $\mathcal R(N)$, 
\steven{and, by
Observation~\ref{obs:twocut}, $C$ and $C'$ are compatible, 
it follows that $C'\subsetneq C$.}
Thus, there exists a further
element $C''\in Cut(N')$ distinct from $C$ and $C'$ 
such that $C\cap C''\not=\emptyset$ holds, too. Thus, $|C|\geq 2$. \\

{\em \steven{The structure of $N$:}}
Let $r\in V(N)$ denote the head of the cut arc $(r',r)$ of $N$ for which 
$C=C_N(r)$ holds and let $B_r$ denote the gall of $N$ that contains $r$  
in its underlying cycle (which exists because $|C|\geq 2$ and $N$ is 
saturated). In view of the usual assumption that no gall in 
a phylogenetic network has two or fewer outgoing cut arcs, $B_r$ has 
at least three outgoing cut arcs $c_1, c_2$ and $c_3$ (see Fig.~\ref{fig:prop51}(i)). Let $c_1$ denote the outgoing cut arc of $B_r$ whose
tail is the hybrid vertex $h_{B_r}$ of $B_r$. 
Let $z\in C_N(h_{B_r})$, let
$x\in C_N( head(c_2) )$ and let $y\in C_N( head(c_3) )$. Clearly, $\mathcal R(N)$  
contains two distinct triplets $t$ and $t'$ on $\{x,y,z\}$. \\

{\em \steven{The structure of $N'$:}}
Since $\mathcal R(N)\subseteq \mathcal R(N')$ we also have
$t,t'\in \mathcal R(N')$.
Since $Cut(N')$ is the partition of $X$  induced by the maximal SN-sets of 
$\mathcal R(N')$, it follows that either (i) there exists some element 
$A\in Cut(N')$ such that $x,y,z\in A$ or (ii) there exist distinct
elements $C_x,C_y,C_z\in Cut(N')$ such that $a\in C_a$, for all
$a\in \{x,y,z\}$. 


Assume first that Case (i) holds. We claim that $C \subseteq A$. 
To see this, note that we were free to choose any two cut arcs $c_2$ and $c_3$ 
distinct from $c_1$ and subsequently we had a free choice of $z$, $x$, $y$. 
For any $Z:=\{x,y,z\}$ chosen this way - let us call this a valid choice - 
it is straightforward to see that there exist two triplets on $Z$ in
$\mathcal{R}(N)$ and thus in $\mathcal{R}(N')$. Since $A$ is an SN-set of 
$\mathcal{R}(N')$ it follows that 
as soon as two of the three leaves of a triplet on $Z$ are contained in $A$, 
so too is the third. 
Now, let $\{x,y,z\}$ be our initial valid choice, so by assumption 
$\{x,y,z\} \subseteq A$. Simple case analysis shows that for any 
element $p\in C$, 
at least one of $\{x,y,p\}$, $\{x,p,z\}$ or $\{p,y,z\}$ is a valid choice. 
Hence, $p\in A$ which proves the claim. Since $C \not \in Cut(N')$
we have  in fact $C\not=A$. But $C\subsetneq A$ cannot hold either
because $C$ is a maximal SN-set for $\mathcal{R}(N)$ and $A$ is a 
maximal SN-set for $\mathcal{R}(N')$, and
by Observation \ref{obs:twocut} this cannot happen. Thus, Case~(ii) 
must hold (see Fig.~\ref{fig:prop51}(ii)).\\

{\em \steven{The triplets $t_1$ and $t_2$:}}
Let $h\in V(N)$ denote the hybrid vertex of the topmost gall $K$ of $N$,
that is, the gall of $N$ that contains the root of $N$ in its vertex set
(which must exist because $N$ is saturated). Also note that 
because $C\in Cut(N)$ it follows that $(r',r)$ is a highest cut arc of
$N$ and thus $r'$ is a vertex of $K$. Since $|Cut(N)|\geq 3$
there exist distinct elements $C_1,C_2\in Cut(N)-\{C\}$
such that $C_N(h)\in \{C,C_1,C_2\}$. Choose some $p\in C_1$
and some $q\in C_2$. Combined with the definition of $\mathcal R(N)$
it follows that $\mathcal R(N)$ must contain two triplets $t_1$
and $t_2$ on $\{x,p,q\}$,
two triplets on $\{y,p,q\}$, and two triplets
on $\{z,p,q\}$. Note that since $\mathcal R(N)\subseteq \mathcal R(N')$,
those six triplets are also contained in 
$\mathcal R(N')$. \steven{(In the next part of the proof we assume $C_{N'}(h') = C_z$, where these terms will be defined in due course, and the critical point
here is that  $C_{N'}(h') \neq C_x.$ This is genuinely without loss of generality because
when selecting $t_1$ and $t_2$ in the present part of the proof it does not matter whether they are on $\{x,p,q\}$, $\{y,p,q\}$ or
$\{z,p,q\}$.)}\\

{\em \steven{The taxa $x, p, q$ are all beneath distinct highest cut arcs of $N'$, but none of these
are incident to the hybridization vertex:}}
Using $x$, $y$, $z$, $p$ and $q$, we next analyze the structure 
of $Cut(N')$ (see Fig.~\ref{fig:prop51}(ii)).
Observe first that since $|Cut(N')| \geq 3$,
the root of $N'$ must be contained in a gall $B'$ of $N'$. Let $h'\in V(N')$ denote
the unique hybrid vertex of $B'$.
Let $C_p,C_q\in Cut(N')$ be such that $p\in C_p$ and $q\in C_q$.

We claim that $C_p$ and $C_q$ are distinct elements
in $Cut(N')-\{C_x,C_y,C_z\}$.
To see this, note first that, 
since the sets $C_x$, $C_y$ and $C_z$ are pairwise distinct
and $t,t'\in \mathcal R(N')$, it follows
that one of $x$, $y$, and $z$ must be contained in $C_{N'}(h')$.
Without loss of generality, assume that $z\in C_{N'}(h') = C_z$.  Note next that $C_p \neq C_q$. Indeed, at least two elements of $\{x,y,z\}$ 
are not contained in $C_p$, because $C_x, C_y$ and $C_z$ are distinct. Suppose, 
without loss of generality, $x\not\in C_p$. If $C_p = C_q$, then only the 
triplet $pq|x$ will be contained in $\mathcal{R}(N')$, 
contradicting the fact that $t_1$ and $t_2$ are distinct triplets
on $\{x,p,q\}$ contained in $\mathcal{R}(N')$. 
It remains to show that $C_p,C_q\not\in \{C_x,C_y,C_z\}$. 
Assume for \steven{the sake of} contradiction that $p\in C_x$.
Then only $xp|q$ is in $\mathcal{R}(N')$, because $q \not \in C_x$, 
contradicting the fact that both $t_1$ and $t_2$ are in $\mathcal{R}(N')$. 
Similarly, if $p$ is in $C_y$, then at most one of the two triplets on 
$\{y,p,q\}$ are in $\mathcal{R}(N')$, and if $p$ is in $C_z$, at most one of 
the two triplets on $\{z,p,q\}$ are in $\mathcal{R}(N')$.  So 
$C_p \not \in \{C_x, C_y, C_z\}$. By a symmetrical argument, 
$C_q \not \in \{C_x, C_y, C_z\}$. This proves the claim.\\
\\
\steven{By the previous claim, neither $p$ nor $q$ is in $C_z$. 
Since $x\not\in C_z$ it follows that
$t_1$ and $t_2$ cannot both be contained in $\mathcal{R}(N')$ which gives the 
final contradiction. Thus,
$Cut(N)\subseteq Cut(N')$, as required.\\ 
\\
Since both $Cut(N)$ and $Cut(N')$
are partitions of $X$, it follows that  $Cut(N)=Cut(N')$}.
\end{proof}

\section{Triplet systems that $\cL_1(X)$-define}
\label{sec:triplets-define}

As is well-known, every binary phylogenetic tree $T$ on $X$ is defined by the
triplet set $\mathcal R(T)$ induced by $T$\steven{,} where a
a phylogenetic tree $S$ on $X$ is said to be {\em defined} 
by a triplet system $\mathcal R$ on
$X$, if, up to equivalence, $S$ is the unique phylogenetic tree on
$X$ for which $\mathcal R\subseteq \mathcal R(S)$ holds 
(see e.\,g.\,\cite{SS03}). In this context
it is important to note that this uniqueness only holds within
the space of phylogenetic trees because all networks $N\in\cL_1(X)$
that display $T$ have the property that 
$\mathcal R(T) \subseteq \mathcal R(N)$. Combined with the
fact that the network $N$ pictured in Fig.~\ref{fig:new}(i)  is, up to 
equivalence, the only binary level-1 network on $X=\{x_1,\ldots, x_4\}$
that is consistent with all five triplets
depicted in  Fig.~\ref{fig:new}(ii) - a simple case analysis can be applied to verify this - and $|\mathcal R(N)|= 7$, it is 
natural to ask how many triplets suffice to ``uniquely determine'' 
a level-1 network. In this section we provide a partial 
answer to this question.
\begin{figure}[!ht]
\begin{tabular}{cc}
\includegraphics[scale=0.4] {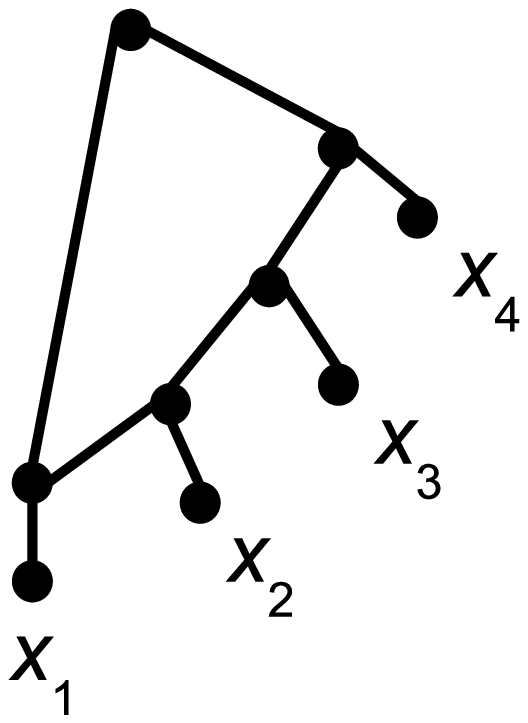} & 
\includegraphics[scale=0.4] {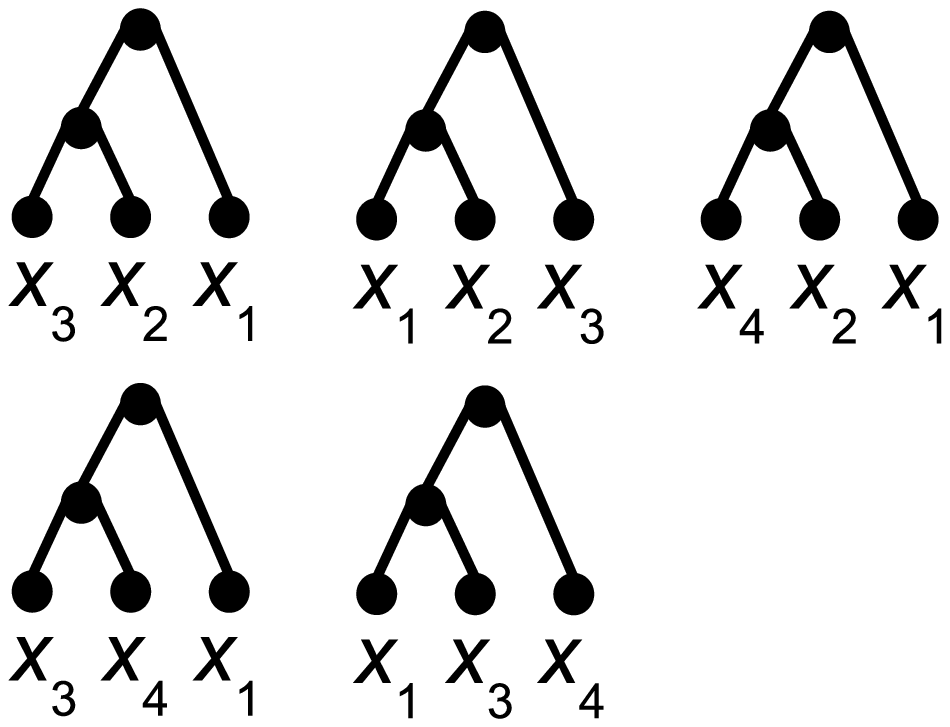}\\
(i) & (ii)
\end{tabular}
\caption{The binary level-1 $N$ on $X=\{x_1,\ldots, x_4\}$
depicted in (i) is uniquely determined by the five triplets
pictured in (ii) but $|\mathcal R(N)|=7$.}
\label{fig:new}
\end{figure}
More precisely, 
saying that  a network $N\in \cL_1(X)$ is {\em $\cL_1(X)$-defined
by a triplet system $\mathcal R$ (on $X$)} if, up to equivalence, 
$N$ is the unique network in $\cL_1(X)$ such that
$\mathcal R\subseteq \mathcal R(N)$ holds, we
 show that every 4-outwards network $N$ in $\cL_1(X)$ that is also
simple is $\cL_1(X)$-defined by a triplet system of size at most $2|X|-1$.
In addition, we show that if the requirement that
$N$ is simple is replaced by the requirement that $N$ is saturated\steven{,}
then $N$ is  $\cL_1(X)$-defined by 
$\mathcal R(N)$. 

\textcolor{black}{We note that 4-outwards is certainly a necessary condition for a network in $N\in \cL_1(X)$ to be $\cL_1(X)$-defined
by any triplet system. In particular, if a network $N$ has a gall with exactly three outgoing cut arcs then these can be permuted
without affecting $\mathcal{R}(N)$. However, we shall see later that 4-outwards is not, in isolation, a sufficient condition}.
 

As the next result \steven{shows} not all triplets
in $\mathcal R(N)$ are required to  $\cL_1(X)$-define a network
$N\in \cL_1(X)$ in case \steven{$N$ is not only 4-outwards but also simple.}
To simplify its exposition, we say that a triplet system
 on $X$ {\em $\cL_1(X)$-defines} a network $N\in \cL_1(X)$
if $N$ is {\em $\cL_1(X)$-defined} by it.

\begin{theorem}\label{theo:defineSimpleTriplets}
Every simple network in $\cL_1(X)$ with at least four leaves 
is $\cL_1(X)$-defined
by a triplet system of size at most $2|X|-1$.
\end{theorem}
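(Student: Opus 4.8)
The plan is to prove the statement by induction on $n=|X|$: delete a carefully chosen leaf, invoke the inductive hypothesis, and re-attach the leaf using just two extra triplets.

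\emph{Setup and base case.} Recall from Section~\ref{notation} that a simple network $N\in\cL_1(X)$ on $n\ge 4$ leaves is automatically $4$-outwards: its unique gall is a cycle through the root $\rho_N$ and the unique hybrid vertex $h$, it splits into two internally vertex-disjoint directed paths from $\rho_N$ to $h$, and its leaves are the head $c$ of the outgoing arc of $h$ together with one pendant leaf for each interior vertex of those two paths. List the pendant leaves of the two paths, in the order met from $\rho_N$, as $a_1,\dots,a_p$ and $b_1,\dots,b_q$; then $p+q=n-1$ and we may assume $p\ge q$, so $p\ge 2$. Since $N$ displays exactly the two trees obtained by deleting one incoming arc of $h$, a direct computation shows that for every $1\le i\le p-1$ both $a_ia_{i+1}|c$ and $a_{i+1}c|a_i$ lie in $\mathcal{R}(N)$ (and symmetrically for the $b$'s). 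For $n=4$ there are, up to equivalence, only finitely many simple networks, and a finite case check verifies that each is $\cL_1(X)$-defined by $\mathcal{R}(N)$ itself; this is consistent with $|\mathcal{R}(N)|=7=2\cdot4-1$, since every other network in $\cL_1(X)$ on four leaves has strictly fewer triplets (trees have $4$, and the remaining level-$1$ networks on four leaves --- a length-four gall with a cherry hanging below it --- have $6$).

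\emph{Inductive step: construction.} Let $n\ge 5$ and let $a_p$ be the deepest pendant leaf on the longer of the two paths. Put $N^-:=N|_{X-\{a_p\}}$; deleting $a_p$ merely suppresses its parent, so $N^-$ is again a simple network, now on $n-1\ge 4$ leaves, and, since restriction commutes with forming triplets for level-$1$ networks, $\mathcal{R}(N^-)=\{t\in\mathcal{R}(N):a_p\notin L(t)\}\subseteq\mathcal{R}(N)$. By the induction hypothesis there is a triplet system $\mathcal{R}^-\subseteq\mathcal{R}(N^-)$ with $|\mathcal{R}^-|\le 2(n-1)-1=2n-3$ that $\cL_1(X-\{a_p\})$-defines $N^-$. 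Set $\mathcal{R}:=\mathcal{R}^-\cup\{a_{p-1}a_p|c,\ a_pc|a_{p-1}\}$. Then $\mathcal{R}\subseteq\mathcal{R}(N)$ and $|\mathcal{R}|\le 2n-1$, so it remains to show $\mathcal{R}$ $\cL_1(X)$-defines $N$.

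\emph{Inductive step: uniqueness.} Let $N'\in\cL_1(X)$ with $\mathcal{R}\subseteq\mathcal{R}(N')$. Again by restriction commuting with triplets, $\mathcal{R}^-\subseteq\mathcal{R}(N'|_{X-\{a_p\}})$, so the induction hypothesis forces $N'|_{X-\{a_p\}}=N^-$ up to equivalence; hence $N'$ arises from $N^-$ by re-introducing $a_p$. Let $p_*$ be the parent of $a_p$ in $N'$. The following four cases are exhaustive. (i) $p_*=\rho_{N'}$: then $X\setminus\{a_p\}$ is the cluster of a highest cut arc of $N'$ and hence an SN-set of $\mathcal{R}(N')$, contradicting $a_{p-1}a_p|c\in\mathcal{R}(N')$. (ii) $p_*$ is a split vertex whose second child is a leaf $w$, so $\{a_p,w\}$ is a cherry: running through $w=c$, $w=a_{p-1}$, $w=a_i$ with $i\le p-2$, and $w=b_j$, and computing the triplet(s) of $N'$ on $\{a_{p-1},a_p,c\}$ (using that $a_p$ then plays the same role as $w$) shows that in each case $a_{p-1}a_p|c$ or $a_pc|a_{p-1}$ is absent from $\mathcal{R}(N')$. (iii) $p_*$ is a split vertex whose second child is not a leaf: since $N^-$ is simple it has no non-trivial cut arc, which forces $p_*$ onto the cycle of $N'$, so $N'$ is simple with $a_p$ a new pendant on the cycle of $N^-$; comparing the triplet pair of $N'$ on $\{a_{p-1},a_p,c\}$ over all positions of $a_p$ on that cycle, only the position immediately above $h$ retains both required triplets --- and it yields exactly $N$. (iv) $p_*$ is a hybrid vertex: then deleting $a_p$ collapses a gall of $N'$, whose root vertex, being a split vertex of the simple network $N^-$, would lie on the cycle of $N^-$; but then this gall and the gall of $N^-$ share a vertex, contradicting the vertex-disjointness of galls in a level-$1$ network. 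Hence $N'=N$, completing the induction.

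\emph{Main obstacle.} The two load-bearing points are: the lemma that $\mathcal{R}(N|_Y)=\{t\in\mathcal{R}(N):L(t)\subseteq Y\}$ for level-$1$ networks, where the only delicate case is checking that the clean-up operation collapsing a gall that has dropped to two outgoing arcs neither creates nor destroys consistent triplets; and making the case analysis in the uniqueness step airtight, i.e.\ that ``$N'|_{X-\{a_p\}}=N^-$'' really does confine $N'$ to cases (i)--(iv), which rests on $N^-$ being simple (no non-trivial cut arcs) and on galls in a level-$1$ network being pairwise vertex-disjoint. Everything else --- the triplet bookkeeping in cases (ii) and (iii), and the claim $a_ia_{i+1}|c,\,a_{i+1}c|a_i\in\mathcal{R}(N)$ --- is routine.
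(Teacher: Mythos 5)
Your proposal is correct and follows essentially the same strategy as the paper's proof: induct on $|X|$, settle $n=4$ by a finite case check, remove a leaf whose parent lies on the cycle next to the hybrid vertex, apply the induction hypothesis to the restricted simple network, and pin down the position of the re-attached leaf with two additional triplets. The only differences are bookkeeping ones --- you always delete the deepest leaf on the longer side and use the uniform triplet pair $a_{p-1}a_p|c$, $a_pc|a_{p-1}$ instead of the paper's three-case choice depending on the root's position, and your uniqueness step (case analysis on the parent of the re-inserted leaf, using SN-sets and vertex-disjointness of galls) is, if anything, spelled out in more detail than the paper's; the remaining points you defer (e.g.\ that in your case (iii) a small gall containing $p_*$ cannot collapse under restriction, which follows by the same disjointness argument as your case (iv)) are true and no less rigorous than the corresponding steps in the published proof.
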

\begin{proof}
We prove the theorem by induction on $|X|\geq 4$. 
Suppose $N$ is a simple network in $\cL_1(X)$\steven{,} where $n=|X|\geq 4$.
\steven{Let} $X=\{x_1,\ldots, x_n\}$.
Assume without loss of generality that $x_1$ is the head of the
outgoing arc of the unique gall $C$ of $N$ starting at the
hybrid vertex $v_1$ of $C$.
If $n=4$ then a straightforward case analysis
implies that  $N$ is $\cL_1(X)$-defined
by $\mathcal R(N)$. Note that $|\mathcal R(N)|=7=2n-1$ holds in this 
case.

Now assume that $n\geq 5$ and that for every set
$Y$ with $4 \leq |Y|\leq n-1$ and every 
simple network $N'\in\cL_1(Y)$
there exists a triplet system $\mathcal R$ of size at most $2|Y|-1$
such that $N'$ is $\cL_1(Y)$-defined by it. Starting at $v_1$ and
traversing the unique cycle $C$ in the underlying graph $U(N)$ of
$N$ counter-clockwise let $v_1,v_2,\ldots, v_{i-1},v_i=\rho_N,
v_{i+1},\ldots, v_{n+1},v_1$ denote a circular ordering of the vertices of
$C$. Without loss of generality assume that for all $2\leq j\leq i-1$
the head of the outgoing arc of $C$  
starting at $v_i$ is $x_i$ and that for
all $i+1\leq j\leq n+1$ the head of the outgoing arc of $C$ starting 
at $v_j$ is $x_{j-1}$. \steven{Let} $X'=X-\{x_n\}$.
We distinguish between the
cases that (i) the root $\rho_N$ of $N$ equals $v_{n+1}$ and
(ii) that this is not the case.

Case~(i): Assume that $\rho_N=v_{n+1}$ and \steven{let} 
$N'=N|_{X'}$. 
Since $N'$ is
clearly simple and $4\leq |L(N')|= n-1$ it follows by \steven{the} induction
hypothesis that there exists a triplet system $\mathcal R'$ on $X'$
of size at most $2(n-1)-1$ such that $N'$ is $\cL_1(X')$-defined 
by $\mathcal R'$. \steven{Let} $t_1=x_1|x_{n-1}x_n$ and $t_2=x_n|x_1x_{n-1}$.
We claim that $N$ is $\cL_1(X)$-defined by 
$\mathcal R=\mathcal R'\cup\{t_1,t_2\}$. To see this, assume that
$N_1$ is a network in $\cL_1(X)$ for which 
$\mathcal R\subseteq \mathcal R(N_1)$ holds. We need to show 
that $N$ and $N_1$ are equivalent.

\steven{Let} $N_1'=N_1|_{X'}$. By construction,
$\mathcal R'\subseteq \mathcal R(N_1')$. Since $N'$ is $\cL_1(X')$-defined 
by $\mathcal R'$ it follows that $N'$ and $N_1'$ must be equivalent. 
Consequently 
$N_1'$ must also be a simple network in $\cL_1(X')$.
Combined with the fact that $t_1,t_2\in\mathcal R\subseteq \mathcal R(N_1)$
it follows that $N$ and $N_1$ must be equivalent. \steven{Indeed, 
let $w$ denote the parent of $x_n$ in $N_1$. Then
$t_2$ implies that $w$ is either the head of one of the
two arcs of $N_1$ starting at the root $\rho'$ of the unique cycle of $N_1$
or a child of the root of $N_1$. Since
 $t_1$ implies that the parent $w'$ of $x_{n-1}$  is
below $w$, it follows  that $w$ must lie on the path in $N_1$
from $\rho'$ to $w'$.} 

Case~(ii) Assume that $\rho_N\not= v_{n+1}$. Then $i\in \{2,\ldots,n\}$.
We distinguish between the cases that $i=n$, that is, $\rho_N=v_n$
 and that $i\in \{2,\ldots,v_{n-1}\}$. In the former case 
the proof of the induction step is similar to the previous case
but with $t_1$ replaced by $x_{n-1}|x_nx_1$. In the latter case 
the proof of that step is also similar to the previous case
but this time with $t_2$ replaced by $x_{n-1}|x_nx_1$.
\end{proof}


Combined with the definition of  $\cL_1(X)$-defining
triplet systems, Theorem~\ref{theo:defineSimpleTriplets}
immediately implies:

\begin{corollary}\label{cor:defineSimpleTriplets}
Every simple network in $\cL_1(X)$ with at
least four leaves is $\cL_1(X)$-defined by
its induced triplet system.
\end{corollary}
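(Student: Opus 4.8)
The plan is to derive this as an immediate monotonicity consequence of Theorem~\ref{theo:defineSimpleTriplets}, so essentially no new work is needed. First I would invoke that theorem to obtain, for the given simple network $N\in\cL_1(X)$ with $|X|\geq 4$, a triplet system $\mathcal R$ of size at most $2|X|-1$ such that $N$ is $\cL_1(X)$-defined by $\mathcal R$. The key preliminary observation is that $\mathcal R\subseteq \mathcal R(N)$: this is forced by the very definition of ``$\cL_1(X)$-defined'', since $N$ itself is required to be a network in $\cL_1(X)$ satisfying $\mathcal R\subseteq\mathcal R(N)$ (it is the unique such network).

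Next I would run the transfer-of-uniqueness argument. Let $N_1\in\cL_1(X)$ be any network with $\mathcal R(N)\subseteq\mathcal R(N_1)$. Then $\mathcal R\subseteq\mathcal R(N)\subseteq\mathcal R(N_1)$, so $N_1$ satisfies the defining condition for $\mathcal R$; by uniqueness in Theorem~\ref{theo:defineSimpleTriplets}, $N_1$ is equivalent to $N$. Hence $N$ is, up to equivalence, the unique network in $\cL_1(X)$ containing $\mathcal R(N)$, i.e.\ $N$ is $\cL_1(X)$-defined by $\mathcal R(N)$. I do not anticipate any obstacle here: the only point requiring a moment's care is the inclusion $\mathcal R\subseteq\mathcal R(N)$, and that is immediate from the definitions. (One could also remark that the corollary is strictly weaker than the theorem, since $|\mathcal R(N)|\geq\binom{|X|}{3}\geq 2|X|-1$ for $|X|\geq 4$, but this is not needed for the proof.)
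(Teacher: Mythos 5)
Your argument is correct and is exactly the reasoning the paper compresses into "Combined with the definition of $\cL_1(X)$-defining triplet systems, Theorem~\ref{theo:defineSimpleTriplets} immediately implies" the corollary: a defining subsystem $\mathcal R\subseteq\mathcal R(N)$ transfers its uniqueness to the full system $\mathcal R(N)$ by monotonicity of inclusion. No gap; same approach as the paper, just made explicit.
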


An obvious problem with extending Corollary~\ref{cor:defineSimpleTriplets}
to general networks in  $\cL_1(X)$
 is that 4-outwards networks can have tree-like
regions. \steven{Consider, for example, then situation when a} a 4-outwards network $N$ contains a directed
path of length 3 or more consisting solely of cut arcs. \steven{We} can then
transform it into a new network $N'$ in $\cL_1(X)$ for
which $\mathcal R(N)\subseteq \mathcal R(N')$ holds
by subdividing the first and last cut arc of that path by new vertices
$u$ and $v$, respectively, and adding a new arc $(u,v)$.
There are however
more subtle situations possible which do not require adding vertices
and arcs. Consider, for example, the two networks 
$N$ and $N'$ on $X=\{x_1,\ldots, x_5\}$ presented in 
Figures~\ref{fig:simple}(ii) and \ref{fig:softwired-clusters}(ii),
respectively. Then $\mathcal R(N')\subseteq  \mathcal R(N)$
holds but $N$ and $N'$ are clearly not equivalent. Thus\steven{,}
$N'$ is not $\cL_1(X)$-defined by $\mathcal R(N')$ (although
$N'$ is clearly encoded by $\mathcal R(N')$ as it is $4$-outwards
\cite{GH12}).
We therefore next turn our attention to \steven{identifying additional
conditions which allow 4-outwards networks to be $\cL_1(X)$-defined by
their induced triplet systems.}

To establish our next result  
(Theorem~\ref{theo:shapely-saturated-triplet}), 
we require a construction from \cite{vIK11a} that
allows us to associate a level-1 network $Collapse(N)$ 
to any level-1 network $N$ such that $Collapse(N)$ is either simple, or is a 
phylogenetic tree on two leaves. We next review this construction
for networks in $\cL_1(X)$. 

\steven{Let $N$ be a network in $\cL_1(X)$}. 
For each element $C\in Cut(N)$ choose some element $c_C\in C$ and
\steven{let} $X^*=\{c_C\,:\,C\in Cut(N)\}$. Note that $|X^*|\geq 2$, 
but if $N$ is
saturated $|X^*|\geq 3$ and if $N$ is saturated and 4-outwards $|X^*|\geq 4$.
\steven{Then the rooted DAG $Collapse(N)$ is obtained from $N$ as follows: for 
each highest cut arc $(u,v)$ of $N$,
replace the (directed) subgraph of $N$ containing $v$ and all vertices
below $v$ \steven{(we denote this subgraph $N_{x_v}$ for later use)} by the unique element $x_v$
in $X^*\cap C_N(v)$.} Clearly, if $|C_N(v)|\geq 2$ then 
$N_{x_v}$ is contained in $\cL_1(C_N(v))$ and is an isolated vertex otherwise.
That $Collapse(N)$ is a simple network in $\cL_1(X^*)$
or a phylogenetic tree on two leaves is clear. 
Let $\mathcal R_{Collapse(N)}$ denote
the triplet system on $X^*$
comprising all triplets $x_w | x_u x_v$
%
for which there \steven{exist} $x_1 \in C_N(w)$, $x_2 \in C_N(u)$ and $x_3 \in C_N(v)$  such that $x_1|x_2x_3\in\mathcal R(N)$. It is
 straightforward to see
that $\mathcal R(Collapse(N)) = \mathcal R_{Collapse(N)}$.

\begin{theorem}\label{theo:shapely-saturated-triplet}
Every 4-outwards network in $\cL_1(X)$ that is also
saturated is $\cL_1(X)$-defined by its induced triplet system.
\end{theorem}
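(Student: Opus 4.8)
The plan is to prove this by induction on $|X|$, using the $Collapse$ construction to peel off one ``level'' of galls at a time and reduce to the simple case already handled in Corollary~\ref{cor:defineSimpleTriplets}. So suppose $N\in\cL_1(X)$ is 4-outwards and saturated, and let $N_1\in\cL_1(X)$ be any network with $\mathcal R(N)\subseteq\mathcal R(N_1)$; we must show $N$ and $N_1$ are equivalent. The base case is when $Collapse(N)=N$, i.e. $N$ is itself simple, in which case we are done by Corollary~\ref{cor:defineSimpleTriplets} (note $|X^*|\geq 4$ since $N$ is saturated and 4-outwards, so the corollary applies).

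For the inductive step, the first key move is to invoke Proposition~\ref{prop:samecut} to conclude $Cut(N)=Cut(N_1)$; call this common partition $\mathcal{P}$. This pins down the ``top-level'' structure: each block $C\in\mathcal{P}$ is the cluster below a highest cut arc in both networks. I would then argue, for each block $C$ with $|C|\geq 2$, that the restriction $N_1|_C$ must satisfy $\mathcal R(N|_C)\subseteq\mathcal R(N_1|_C)$ — this should follow because any triplet entirely within $C$ that is consistent with $N$ is carried along into $N_1$ by hypothesis, and restriction to an SN-set behaves well with respect to triplet consistency. Since $N|_C$ is again 4-outwards and saturated (being a restriction of $N$ to one of its highest-cut-arc clusters — one should check the ``cleaning up'' operations preserve these properties here, which they do because $C$ is exactly a top-level block), and $|C|<|X|$, induction gives that $N|_C$ and $N_1|_C$ are equivalent.

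The remaining, and I expect \emph{main}, obstacle is to reconstruct the top-level gall structure — i.e. to show $Collapse(N)$ and $Collapse(N_1)$ are equivalent — and then glue. Here the plan is: pick representatives $X^*=\{c_C : C\in\mathcal{P}\}$ as in the $Collapse$ definition; using $Cut(N)=Cut(N_1)=\mathcal{P}$, both $Collapse(N)$ and $Collapse(N_1)$ live in $\cL_1(X^*)$ and are simple with $|X^*|\geq 4$. The identity $\mathcal R(Collapse(N))=\mathcal R_{Collapse(N)}$ from the discussion preceding the theorem, together with $\mathcal R(N)\subseteq\mathcal R(N_1)$, should give $\mathcal R(Collapse(N))\subseteq\mathcal R(Collapse(N_1))$; then since $Collapse(N)$ is simple and 4-outwards (4-outwardness of $N$ forces $Collapse(N)$ to have a gall of girth $\geq 5$), Corollary~\ref{cor:defineSimpleTriplets} applies and yields $Collapse(N)$ equivalent to $Collapse(N_1)$. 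The delicate point is verifying that $Collapse(N)$ inherits 4-outwardness — one must rule out that collapsing many small clusters onto a long cycle could somehow... actually the cycle length is unchanged by $Collapse$, so this is fine, but it needs to be stated carefully. Finally, combining the equivalence of the top-level network $Collapse(N)\cong Collapse(N_1)$ with the blockwise equivalences $N|_C\cong N_1|_C$ for all $C\in\mathcal{P}$, and using that each such $N|_C$ hangs below the \emph{same} highest cut arc in $Collapse$, one assembles a graph isomorphism $N\to N_1$ fixing $X$. The one subtlety to watch in this gluing is ensuring the hybridization vertex of the top gall receives a consistent cluster in both networks — but this is exactly guaranteed by $Cut(N)=Cut(N_1)$ and the triplets $t,t'$ machinery from Proposition~\ref{prop:samecut}, so no new argument is needed beyond careful bookkeeping.
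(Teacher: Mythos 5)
Your proposal is correct and follows essentially the same route as the paper's proof: invoke Proposition~\ref{prop:samecut} to equate $Cut(N)$ and $Cut(N_1)$, handle the top gall via $Collapse$ together with Corollary~\ref{cor:defineSimpleTriplets} (using $|X^*|\geq 4$ from saturation and 4-outwardness), apply induction to the pendant subnetworks below the highest cut arcs, and glue. The only cosmetic difference is that you induct on $|X|$ while the paper inducts on the number of galls $g(N)$; both measures strictly decrease on the blocks, so this changes nothing of substance.
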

\begin{proof}
We prove the theorem by induction on the number $g(N)$ of galls 
in a saturated, 4-outwards network $N\in\cL_1(X)$. Suppose $N$ is such
a network. \steven{Let} $g=g(N)$. 
Then since $|X|\geq 2$ and $N$ is saturated we have $g\geq 1$. 
Hence, $|X|\geq 3$.
In case $g=1$\steven{,} the assumption that $N$ is saturated implies that
$N$ is simple, and thus $|X| \geq 4$ because $N$ is 4-outwards. \steven{By}
Corollary~\ref{cor:defineSimpleTriplets}, $N$ is $\cL_1(X)$-defined
by $\mathcal R(N)$. 

So assume that $g\geq 2$ and that every saturated, 4-outwards 
network $N\in \cL_1(Y)$ with $g-1$ galls 
is $\cL_1(Y)$-defined by a triplet system on $Y$, where $Y$ is a finite set
of size at least two. Let $N'\in\cL_1(X)$ denote a network for
which $\mathcal R(N) \subseteq \mathcal R(N')$ holds. We need to show
that $N$ and $N'$ are equivalent. To see this, we first analyze the
networks $Collapse(N)$ and $Collapse(N')$.

Note first that,
by Proposition~\ref{prop:samecut},
$Cut(N') = Cut(N)$. Hence, we may assume
without loss of generality that  $X^*$ is 
the leaf set of both $Collapse(N)$ and $Collapse(N')$. 
Next note that
$Collapse(N)$ is 4-outwards
because $N$ is 4-outwards and $|Cut(N)|\geq 4$. Since a simple level-1
network is in particular saturated and  $Collapse(N)$ has precisely
one gall, the base case of the induction implies that $Collapse(N)$
is $\cL(X^*)$-defined by $\mathcal R_1:=\mathcal R(Collapse(N))$. Since
with
 $\mathcal R_2:=\mathcal R(Collapse(N'))$ we have 
$\mathcal R_{Collapse(N)}=\mathcal R_1
\subseteq \mathcal R_2=
\mathcal R_{Collapse(N')}$ and so 
$\mathcal R_1\subseteq \mathcal R_2$ holds it follows that
 $Collapse(N)$
and $Collapse (N')$ must be equivalent.

We next analyze the level-1 networks $N_v$ of $N$ with $v\in X^*$. 
Let $v\in X^*$ and let $C\in Cut(N)$ be such that $v\in C$. Note that
if $|C|=1$ then $N_v$ is an isolated vertex and thus a rooted DAG
with leaf set $\{v\}$. So assume that $|C|\geq 2$. Then since $N$ is
a saturated, 4-outwards network in $\cL_1(X)$, $N_v$ is a saturated, 4-outwards
network in $\cL(C)$. Since $N_v$ has at most $g-1$ galls the induction
hypothesis implies that  $N_v$ is $\cL(C)$-defined
by $\mathcal R(N_v)$. By assumption, $\mathcal R(N) \subseteq \mathcal R(N')$
and so $\mathcal R(N_v) \subseteq \mathcal R(N'_v)$.
Thus $N'_v$ and $N_v$ must be equivalent.  
Combined with the observation
that the networks $Collapse(N)$
and $Collapse (N')$
are equivalent it follows that $N$ and $N'$ are equivalent.
\end{proof}

\section{ $\cL_1(X)$-defining cluster systems}
\label{sec:define-clusters}

In this section, we turn our attention to the companion question
of Section~\ref{sec:triplets-define}. That is, whether
some (not necessarily proper) subset of $\mathcal S(N)$ 
is sufficient to ``uniquely
determine'' a 4-outwards network $N$ in $\cL_1(X)$.
We first present a formalization of the 
idea of ``uniquely determining''  to being $\cL_1(X)$-defined 
for cluster systems. Subsequent to this, we then 
show that all 4-outwards networks $N\in\cL_1(X)$
that are also simple are $\cL_1(X)$-defined
by a cluster system of size at most $|X|$
(Theorem~\ref{theo:simple-clusters} and Corollary~\ref{cor:defineSimple}).
Replacing the requirement that $N$ is simple by the more
general requirement that $N$ is saturated\steven{,} we also show that such networks
are $\cL_1(X)$-defined by their induced softwired cluster system
(Theorem~\ref{theo:shapely-saturated-cluster}).

Let $N$ denote a phylogenetic network on $X$ and let $\mathcal S$
denote a cluster system on $X$.
Then we say that $N$ {\em displays $\mathcal S$ (in
the softwired sense)} if $\mathcal S\subseteq \mathcal S(N)$ holds.
Furthermore,  we say that a network $N\in \cL_1(X)$ is
{\em $\cL_1(X)$-defined by a cluster system $\mathcal S$ on $X$} 
if, up to equivalence, $N$
is the unique network in $\cL_1(X)$ that displays 
$\mathcal S$.
It should be noted that, as in the case of triplet systems,    
a binary phylogenetic tree $T$  on $X$ is not $\cL_1(X)$-defined by 
its induced cluster system $\mathcal C(T)=\mathcal S(T)$. The reason is again
that, by subdividing arcs of $T$ and adding new arcs
joining the subdivision vertices, 
we can transform $T$ into a network $N$ in $\cL_1(X)$
for which  $\mathcal C(T)\subseteq \mathcal S(N)$ holds. 
Also it should be noted that 
a network in $\cL_1(X)$ is not $\cL_1(X)$-defined by its
induced hardwired cluster system. Analogous to the triplet result presented in 
Section~\ref{sec:triplets-define}, a 4-outwards
networks $N\in \cL_1(X)$ also need not be $\cL_1(X)$-defined by
 $\mathcal S(N)$. Indeed, consider again the
two 4-outwards networks $N_1$ and $N_2$
on $X=\{x_1,\ldots, x_5\}$ presented in 
Figures~\ref{fig:softwired-clusters}(i) and 
\ref{fig:simple}(i), respectively. Then $N_1$ and $N_2$
are clearly not equivalent but  $\mathcal S(N_1) \subseteq 
\mathcal S(N_2)$.

\begin{theorem}\label{theo:simple-clusters}
Let $X=\{x_1,\ldots, x_n\}$, $n\geq 4$, 
and suppose that $N$ is a simple network in $\cL_1(X)$ such that, when
starting at the hybrid vertex $v_1$ of $N$ and traversing the unique
cycle $C$ of $U(N)$ counter-clockwise, the 
obtained vertex ordering for 
$C$ is $v_1, v_2, \ldots, v_{i-1}, v_i=\rho_N, v_{i+1},\ldots, v_{n+1}, v_1$
and $x_j$ is a child of $v_j$, for all $1\leq j\leq i-1$, and $x_j$ is
a child of $v_{j+1}$, for all $i\leq j \leq n$. \textcolor{black}{Assume without loss of generality that $i-2\geq n-i+1$ i.e. that the right side of the gall contains
at least as many leaves as the left side. Then}
 $N$ is $\cL_1(X)$-defined by the cluster system $\mathcal S_d(X)$ where
\begin{enumerate}
\item[(i)] 
$\mathcal S_d(N):=\bigcup_{2\leq j\leq n-1}\{\{x_1,x_2,\ldots,x_j\}\} 
\cup\{ \{x_2,x_3,\ldots, x_n\}\}$ if $\rho_N=v_{n+1}$ .
\item[(ii)]
$\mathcal S_d(N):=
\bigcup_{3\leq j\leq n-1}\{\{x_2,x_3\ldots, x_j\}\}
\cup \{\{x_1,x_2\},\{x_1,x_n\},\{x_1,x_2,x_3\}\}$ if $\rho_N=v_n$.
\item[(iii)]
\begin{eqnarray*}
\mathcal S_d(N)&:=&\bigcup_{3\leq j\leq i-1}\{\{x_2,x_3\ldots, x_j\}\}\cup
\bigcup_{n-1 \geq j\geq i}\{\{x_n,x_{n-1},\ldots, x_j\}\}\\
&\cup&
\{\{x_1,x_2\},\{x_1,x_n\},\{x_1,x_n,x_{n-1}\}\}
\end{eqnarray*}
if $\rho_N\not\in\{v_n,v_{n+1}\}$. 
\end{enumerate}
\end{theorem}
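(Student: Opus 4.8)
The plan is to fix one of the three cases (say case (i), where $\rho_N = v_{n+1}$) and show that any network $N_1 \in \cL_1(X)$ displaying $\mathcal S_d(N)$ must be equivalent to $N$; the remaining two cases should follow by essentially the same argument with cosmetic modifications to track where the root sits on the cycle. So suppose $N_1 \in \cL_1(X)$ satisfies $\mathcal S_d(N) \subseteq \mathcal S(N_1)$. The key structural fact I would exploit first is that the nested chain $\{x_1, x_2\} \subsetneq \{x_1,x_2,x_3\} \subsetneq \cdots \subsetneq \{x_1,\ldots,x_{n-1}\}$ is a maximal chain of compatible clusters; since each of these is a softwired cluster of $N_1$, and since in a level-1 network a softwired cluster $C$ with $|C| \geq 2$ arises either at the head of a cut arc or as one of the (at most two) clusters associated with a gall vertex, I can start building a picture of where the leaves $x_1, \ldots, x_{n-1}$ attach in $N_1$. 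Because the chain has length $n-2$ and there are only $n$ leaves, this forces a ``caterpillar-like'' or ``single-gall'' structure; in particular $N_1$ can contain at most one gall, and the additional cluster $\{x_2, \ldots, x_n\}$ (which is \emph{incompatible} with most members of the chain) rules out the tree case and pins down that $N_1$ has exactly one gall whose hybrid leaf is $x_1$.

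The heart of the argument is then to verify that the clusters in $\mathcal S_d(N)$ determine the counter-clockwise circular ordering of the leaves around that gall and the location of the root on it. I would argue as follows: the incompatibility between $\{x_2,\ldots,x_n\}$ and $\{x_1,\ldots,x_j\}$ for $2 \leq j \leq n-1$ can only be realized in a level-1 network if $x_1$ hangs off the hybrid vertex, and the two ``sides'' of the gall read off as initial and final segments of $x_2, x_3, \ldots, x_n$ respectively — which, together with the assumption $\rho_N = v_{n+1}$, forces all of $x_2,\ldots,x_n$ on one side, giving precisely the ordering of $N$. The bookkeeping here — showing that a softwired cluster of a one-gall network really does decompose as ``a contiguous arc of one side'' ∪ ``possibly the hybrid leaf'' — is where Theorem~\ref{theo:countingClusters} and the discussion preceding it about which clusters a gall can contribute will be used; I expect to invoke the fact that $|\mathcal S_d(N)| = n \le |\mathcal S(N)|$ only as a sanity check, since the real content is the uniqueness.

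For cases (ii) and (iii) the only change is which extra clusters play the role of witnesses for the root position: in case (ii) the cluster $\{x_1, x_n\}$ (with $x_1, x_n$ forming a cherry below the root-gall vertex $v_n$) together with $\{x_1, x_2\}$ and $\{x_1,x_2,x_3\}$ fixes that exactly one leaf, $x_n$, lies on the ``short'' side; in case (iii), the symmetric pair of chains $\{x_2,\ldots,x_j\}$ and $\{x_n,\ldots,x_j\}$ together with $\{x_1,x_2\}, \{x_1,x_n\}, \{x_1,x_n,x_{n-1}\}$ pins down both side-lengths, using the normalization $i - 2 \geq n - i + 1$ to break the left-right symmetry. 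I would present case (i) in full and then indicate the (routine) modifications.

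\textbf{Main obstacle.} The genuinely delicate step is the classification of which subsets of $X$ can be softwired clusters of a one-gall level-1 network — i.e.\ proving the ``contiguous-arc-plus-optional-hybrid-leaf'' description rigorously, and then checking that the specific small incompatible clusters in $\mathcal S_d(N)$ (the single cluster $\{x_2,\ldots,x_n\}$ in case (i), or the cherry-type pairs in cases (ii)–(iii)) suffice to exclude every alternative cyclic ordering and every alternative root placement. Everything else is compatibility/nesting bookkeeping of the kind already used in Observation~\ref{obs:twocut} and Proposition~\ref{prop:samecut}.
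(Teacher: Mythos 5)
Your overall plan points in the right direction (show that any $N_1\in\cL_1(X)$ displaying $\mathcal S_d(N)$ is simple with hybrid leaf $x_1$, then argue that the clusters force the cyclic ordering and root position), but the two steps on which everything rests are exactly the ones you do not prove, and the justification you do offer for the first one is wrong. The claim that the nested chain $\{x_1,x_2\}\subsetneq\{x_1,x_2,x_3\}\subsetneq\cdots\subsetneq\{x_1,\ldots,x_{n-1}\}$ already forces $N_1$ to have at most one gall fails: every network in $\cL_1(X)$ that displays the caterpillar tree on $x_1,\ldots,x_n$ displays the entire chain, and there are many such networks with several galls and with non-trivial cut arcs, so no counting of the chain length against $n$ can yield the conclusion. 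What actually forces simplicity is a compatibility argument you never state: if $(u,v)$ were a non-trivial cut arc of $N_1$, then every tree displayed by $N_1$ contains the cluster $C_{N_1}(v)$, hence every cluster of $\mathcal S(N_1)\supseteq\mathcal S_d(N)$ must be compatible with $C_{N_1}(v)$; the paper then exhibits, for every candidate $C=C_{N_1}(v)$ with $2\le |C|<n$ (splitting on whether $x_1\in C$), a member of $\mathcal S_d(N)$ incompatible with $C$ (in case (i): $\{x_2,\ldots,x_n\}$ when $x_1\in C$, and $\{x_1,\ldots,x_p\}$ when $x_p,x_q\in C$ with $x_1\notin C$, $p<q$). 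Without this, your later tool is unusable, because the ``contiguous-arc-plus-optional-hybrid-leaf'' description of softwired clusters is only valid for simple networks.

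The second missing ingredient, that classification lemma itself, is precisely what you flag as the ``main obstacle'', i.e.\ it is not part of your argument. Note that the paper's proof avoids it entirely by a lighter device: once $N_1$ is known to be simple with hybrid leaf $x$, the restriction $N_1|_{X-x}$ is a phylogenetic tree, so $\mathcal S(N_1)|_{X-x}=\mathcal S(N_1|_{X-x})$ is a compatible cluster system; exhibiting an incompatible pair inside $\mathcal S_d(N)|_{X-x}$ for each wrong choice of $x$ (for case (i): $\{x_1,x_2\}$ and $\{x_2,\ldots,x_n\}\setminus\{x\}$ when $x\notin\{x_1,x_2\}$, and $\{x_1,x_3\}$ together with $\{x_3,\ldots,x_n\}$ when $x=x_2$) forces $x=x_1$, after which $\{x_1,x_2\}$, the nested chain, and $\{x_2,\ldots,x_n\}$ determine the parent of each leaf one at a time, giving equivalence with $N$; cases (ii) and (iii) are handled the same way with the cherry-type clusters. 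So either adopt that restriction trick, or, if you keep your route, you must both prove the classification for simple networks and first establish simplicity via the cut-arc compatibility argument above; as it stands the proposal has genuine gaps at the decisive steps.
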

\begin{proof}
Let $N_1 \in \mathcal{L}_1(X)$ be a network such that 
$\mathcal{S}_d(N) \subseteq \mathcal{S}(N_1)$. We
first claim that $N_1$ must be simple. Assume for the sake of 
contradiction that $N_1$ is not simple, that is, $N_1$ contains 
a non-trivial cut arc $(u,v)$. Then every cluster in $\mathcal{S}(N_1)$ 
must be compatible with $C= C_{N_1}(v)$, $2 \leq |C| < n$, and 
 $C\in \mathcal{S}(N_1)$. We will derive a contradiction by showing 
that $\mathcal{S}_d(N)$, and thus also $\mathcal{S}(N_1)$, contains 
at least one cluster that is incompatible with
$C$. 

Case (i). We distinguish between the 
two alternatives that $x_1 \in C$ and that $x_1 \not\in C$. 
Assume first that $x_1 \in C$. Then since 
$2 \leq |C| < n$ we have for $C':=\{x_2, \ldots, x_n\}\in \mathcal{S}_d(N)$ 
that $C' \cap C\not=\emptyset$ and that $C' \cap (X\backslash C) \not=\emptyset$,
that is, $C'\not\subseteq C$. Since $x_1\in C$ it follows that 
$C\subseteq C'$ cannot hold either and so $C$ and $C'$ are 
incompatible, as required. Now,
suppose $x_1 \not \in C$. Then since $2 \leq |C|$ there exist 
$p,q\in\{2,\ldots, n\}$ with $p<q$, say, such that 
$x_p, x_q \in C$. Clearly, 
$x_q\not\in C':= \{x_1, \ldots, x_p\}\in \mathcal{S}_d(N)$. But then 
$C'$ and $C$ are again incompatible, as required.

A similar analysis holds for
cases (ii) and (iii); we leave the details to the interested reader. 
Hence, $N_1$ must be simple, as claimed.

Let $h$ denote the unique hybrid vertex of $N_1$ and let $x$ 
denote the leaf of $N_1$ that
is incident with $h$. For the remainder of the proof, we 
consider each of the three cases
stated in the theorem separately. 
All three cases use the following observations: 
(a) if $N_1 |_{X-x}$ is a tree, then all clusters
in $\mathcal{S}( N_1 |_{X-x} )$ are pairwise compatible; 
(b) If $\mathcal{S}_d(N)\subseteq \mathcal{S}(N_1)$, then 
$\mathcal{S}_d(N)|_{X-x}\subseteq \mathcal{S}(N_1)|_{X-x}$ where 
for any cluster system $\mathcal C$ of $X$ we \steven{let}
$\mathcal C|_{X-x} = \{ C \setminus \{x\}\, :\,C \in \mathcal C\}$; 
(c) $\mathcal{S}(N_1 |_{X-x}) = \mathcal{S}(N_1)|_{X-x}$.
For ease of presentation we will liberally make use of the
assumption that $\mathcal{S}_d(N) \subseteq \mathcal{S}(N_1)$ without 
explicitly stating it.

\emph{Case (i)}. First, we argue that $x \in \{x_1, x_2\}$.
Assume for the sake of contradiction that $x \not\in \{x_1, x_2\}$.
Then $C = \{x_1, x_2\}$ and $C' = \{x_2, \ldots, x_n\} \setminus \{x\}$
are incompatible and clearly contained in 
$\mathcal{S}_d(N)|_{X-x}\subseteq \mathcal{S}(N_1)|_{X-x}$. Hence,
$\mathcal{S}(N_1)|_{X-x}$ is not compatible which is impossible
because $x$ is incident with $h$ and so  $N_1 |_{X-x}$ is a 
phylogenetic tree. So $x \in \{x_1, x_2\}$. In fact,  similar
reasoning implies that $x = x_2$
is also impossible as otherwise $\mathcal{S}_d(N)|_{X-x}$ would contain
incompatible clusters $\{x_1, x_3\}$ and $\{x_3, \ldots, x_n\}$. So $x=x_1$. 
Since $\{x_1, x_2\} \subseteq \mathcal{S}_d(N)$ it follows
that the other child of the parent of $x_2$ in $N_1$ is $h$.
Combined with the fact that
$\bigcup_{2 \leq j \leq n-1} \{ \{x_1, x_2, \ldots, x_j\}\} \subseteq \mathcal{S}_d(N)$ it
follows that the other child of the parent of $x_k$ in $N_1$ is the parent of $x_{k-1}$,
$3 \leq k \leq n-1$. Since $\{x_2, \ldots, x_n\} \in \mathcal{S}_d(N)$ it follows
that the other child of the parent of $x_n$ in $N_1$ is the parent of $x_{n-1}$. Hence
$N_1$ is equivalent to $N$.

\emph{Case (ii)}. We claim that $x \in \{x_1, x_2, x_n\}$. The
argument is similar to case (i) in that if $x \not \in \{x_1, x_2, x_n\}$
then $\mathcal{S}_d(N)|_{X-x}\subseteq \mathcal{S}(N_1)|_{X-x}$ contains incompatible
clusters $\{x_1, x_2\}$ and $\{x_1, x_n\}$, leading to a contradiction of 
the fact that $N_1 |_{X-x}$ is a phylogenetic tree.
In fact, similar arguments utilizing the facts that 
$\{x_1, x_2, x_3 \} \in \mathcal{S}_d(N)$  and that $n \neq 3$ imply that
$x \neq x_2$ and $x \neq x_n$. So again $x = x_1$.
 Since $\{x_1, x_2\}$ and $\{x_1, x_n\}$ are contained
in  $\mathcal S_d(N)\subseteq \mathcal S(N_1)$ it follows
that the other child of the parents of $x_2$ and $x_n$ in $N_1$, respectively,
is $h$. In view of  $\{x_1,x_2,x_3\} \subseteq\mathcal S_d(N)
\subseteq \mathcal S(N_1)$ we see that the other child of the parent of $x_3$ in $N_1$ must be the parent of 
$x_2$. Since $\bigcup_{3\leq j\leq n-1}\{\{x_2,x_3\ldots, x_j\}\}
 \subseteq\mathcal S_d(N) $
similar arguments
as in the previous case imply that $N$ and $N_1$ must be equivalent.

\emph{Case (iii)} Again the fact that  $N_1 |_{X-x}$ is a phylogenetic tree
implies  that
$x \in \{x_1, x_2, x_n\}$. However, $x = x_n$ cannot hold 
because $n-1 \neq 2$ and so $\{x_1, x_2\}$ and $\{x_1, x_{n-1}\}$ are distinct clusters
that are both
contained in $\mathcal{S}_d(N)|_{X-x}$ and thus in $ \mathcal{S}(N_1)|_{X-x}$.
But then $ \mathcal{S}(N_1)|_{X-x}$ is incompatible 
which is impossible as   $N_1 |_{X-x}$ is a phylogenetic tree. Similarly,
$x \not= x_2$ as otherwise the two
incompatible clusters $\{x_1, x_n\}$ and $\{x_n, x_{n-1}, \ldots, x_i\}$ 
are contained in $\mathcal{S}_d(N)|_{X-x}$. So $x = x_1$.  Focussing  as in case (ii) on
$x_2$ and $x_n$ we see again that the common child of their respective parents is $h$. Since $\bigcup_{3\leq j\leq i-1}\{\{x_2,x_3\ldots, x_j\}\}\cup
\bigcup_{n-1 \geq j\geq i}\{\{x_n,x_{n-1},\ldots, x_j\}\} 
\subseteq\mathcal S_d(N) $
the location of
the remaining leaves of $N_1$ is forced. Hence, $N_1$ is equivalent to $N$.
%
%
\end{proof}

As an immediate consequence of Theorem~\ref{theo:simple-clusters}, we
obtain the companion result for Theorem~\ref{theo:defineSimpleTriplets}.

\begin{corollary}\label{cor:defineSimple}
Every simple network in $\cL_1(X)$ with at least four 
leaves is $\cL_1(X)$-defined by a cluster system of size at most $|X|$.
\end{corollary}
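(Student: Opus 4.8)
The plan is to read off the corollary from Theorem~\ref{theo:simple-clusters} by a short bookkeeping argument. That theorem already shows that any simple network $N$ presented in the indicated form is $\cL_1(X)$-defined by the explicitly constructed cluster system $\mathcal S_d(N)$, so two things remain: (a) to observe that every simple network in $\cL_1(X)$ with $n=|X|\geq 4$ leaves can be brought into that form, and (b) to count $|\mathcal S_d(N)|$ in each of the three cases and check that it never exceeds $n$.

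For (a): given an arbitrary simple $N\in\cL_1(X)$ with $n\geq 4$ leaves, its hybrid vertex $v_1$ and its root $\rho_N$ both lie on the unique cycle $C$ of $U(N)$. Travelling around $C$ from $v_1$, one reaches $\rho_N$ after $a$ steps in one direction and after $b=(n+1)-a$ steps in the other; I would choose the direction of traversal so that the \emph{longer} arm is traversed first, i.e.\ so that $a\leq b$. With the vertices then relabelled $v_1,\dots,v_{n+1}$ and the leaves $x_1,\dots,x_n$ as in the statement of Theorem~\ref{theo:simple-clusters}, the root is $v_i$ with $i=b+1$, and the inequality $a\leq b$ is exactly the ``without loss of generality'' hypothesis $i-2\geq n-i+1$ of the theorem. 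Moreover exactly one of the three cases applies: $a=1$ puts us in case (i) ($\rho_N=v_{n+1}$), $a=2$ in case (ii) ($\rho_N=v_n$), and $a\geq 3$ in case (iii); in particular for $n=4$ only cases (i) and (ii) arise, consistent with case (iii) requiring $i\geq 4$ while $i\leq n-1$. Theorem~\ref{theo:simple-clusters} then gives that $N$ is $\cL_1(X)$-defined by $\mathcal S_d(N)$.

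For (b): in each case the sets listed in the definition of $\mathcal S_d(N)$ are pairwise distinct --- either they have different sizes, or they are separated by whether they contain $x_1$, $x_2$ or $x_n$ --- so $|\mathcal S_d(N)|$ is simply the number of listed sets. In case (i) the union contributes $n-2$ clusters and the extra set one more, giving $n-1$; in case (ii) the union contributes $n-3$ clusters and the three further clusters are new, giving $n$; in case (iii) the two unions contribute $i-3$ and $n-i$ clusters, disjointly, and the three further clusters are again new, giving $(i-3)+(n-i)+3=n$. Hence $|\mathcal S_d(N)|\leq n=|X|$ in all cases, which is the claim. The only real point of care --- the ``main obstacle'', though a minor one --- is verifying that the freedom in choosing the traversal direction simultaneously secures the theorem's ``without loss of generality'' hypothesis and lands us in one of the three enumerated cases, and that no cluster is double-counted in the union defining $\mathcal S_d(N)$; both follow immediately by tracing through the indices.
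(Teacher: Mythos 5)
Your proposal is correct and matches the paper's (implicit) derivation: the paper states the corollary as an immediate consequence of Theorem~\ref{theo:simple-clusters}, and your argument simply makes explicit the two routine steps involved, namely that the traversal direction can be chosen so that the theorem's ``without loss of generality'' hypothesis holds and one of its three cases applies, and that $|\mathcal S_d(N)|$ equals $n-1$, $n$, $n$ in the respective cases, hence is at most $|X|$.
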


We now prove the cluster equivalent of 
Theorem~\ref{theo:shapely-saturated-triplet} i.\,e.\,that requiring 
that a 4-outwards network in $\cL_1(X)$ is also saturated guarantees that
it is uniquely determined by its induced softwired cluster system.
%

\begin{theorem}\label{theo:shapely-saturated-cluster}
Every 4-outwards network in $\cL_1(X)$ that is also
saturated is $\cL_1(X)$-defined by its induced softwired cluster system.
\end{theorem}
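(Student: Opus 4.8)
The plan is to mirror closely the structure of the proof of Theorem~\ref{theo:shapely-saturated-triplet}, using the $Collapse$ construction together with induction on the number of galls $g(N)$. Let $N$ be a $4$-outwards, saturated network in $\cL_1(X)$ and let $N'\in\cL_1(X)$ be any network with $\mathcal S(N)\subseteq\mathcal S(N')$; we must show $N$ and $N'$ are equivalent. For the base case $g(N)=1$, saturation forces $N$ to be simple, and $4$-outwardness forces $|X|\geq 4$, so Corollary~\ref{cor:defineSimple} already does the job (indeed it gives something stronger). The inductive step then needs two ingredients: first, that the ``top-level'' partition of $X$ seen by $N'$ agrees with that seen by $N$, i.e. a cluster-analogue of Proposition~\ref{prop:samecut}; and second, that $Collapse(N)$ and $Collapse(N')$ are equivalent while each piece $N_v$ (for $v\in X^*$) is recovered by induction.

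The first ingredient is the step I expect to be the main obstacle, since Proposition~\ref{prop:samecut} is phrased for triplet systems and its proof uses the SN-set machinery and the fact that $Cut(N)$ is exactly the set of maximal SN-sets of $\mathcal R(N)$. For clusters I would instead argue directly: by Observation~\ref{obs:twocut}-style reasoning, any cluster $C_{N'}(v')$ that is the head-cluster of a cut arc of $N'$ is compatible with each $C\in Cut(N)$, because out of the three triplets on a would-be incompatible triple one is forced in $\mathcal R(N')$ — but here I need the corresponding statement at the level of $\mathcal S$. The key point is that for a highest cut arc $(u,v)$ of $N$ with head-cluster $C$, we have $C\in\mathcal S(N)\subseteq\mathcal S(N')$, and every cluster of $\mathcal S(N')$ must be compatible with $C$ whenever $C$ is itself the head-cluster of a cut arc in $N'$; combining this with the fact that softwired clusters of galls come in compatible ``left/right'' pairs, I would show the highest-cut-arc clusters of $N'$ refine into, and hence coincide with, the blocks of $Cut(N)$. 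A clean way to package this: since $N$ is saturated, $|Cut(N)|\geq 3$, and each $C\in Cut(N)$ lies in $\mathcal S(N)$; an argument analogous to the ``$N_1$ must be simple'' step in the proof of Theorem~\ref{theo:simple-clusters} (exhibiting an incompatible pair in $\mathcal S_d$) shows that no non-trivial cut arc of $N'$ can cross a block of $Cut(N)$, giving $Cut(N')=Cut(N)$.

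Once $Cut(N')=Cut(N)$ is in hand, I would set $X^*$ to be a transversal of this common partition and look at $Collapse(N)$ and $Collapse(N')$, both simple (or trivial) networks on $X^*$ with $|X^*|\geq 4$ by $4$-outwardness and saturation. The induced softwired cluster systems satisfy $\mathcal S(Collapse(N))\subseteq\mathcal S(Collapse(N'))$ — this needs a short verification that collapsing each highest-cut-arc subtree to a single taxon carries $\mathcal S(N)$-clusters to $\mathcal S(Collapse(N))$-clusters and respects inclusion, exactly parallel to the remark that $\mathcal R(Collapse(N))=\mathcal R_{Collapse(N)}$. By the base case (via Corollary~\ref{cor:defineSimple}), $Collapse(N)$ is $\cL_1(X^*)$-defined by $\mathcal S(Collapse(N))$, so $Collapse(N)$ and $Collapse(N')$ are equivalent. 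Then for each $v\in X^*$ sitting in block $C\in Cut(N)$: if $|C|=1$ there is nothing to do; if $|C|\geq 2$ then $N_v$ is a $4$-outwards saturated network in $\cL_1(C)$ with at most $g-1$ galls, and $\mathcal S(N)\subseteq\mathcal S(N')$ restricts to $\mathcal S(N_v)\subseteq\mathcal S(N'_v)$ (using that the softwired clusters of $N$ below a highest cut arc are exactly those of $N_v$, because display decomposes across cut arcs as in the proof of Theorem~\ref{theo:countingClusters}), so by the induction hypothesis $N_v$ and $N'_v$ are equivalent. Gluing the equivalent pieces $N_v\cong N'_v$ along the equivalent top structures $Collapse(N)\cong Collapse(N')$ yields $N\cong N'$, completing the induction. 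The only genuinely new work, as flagged, is the cluster-side replacement for Proposition~\ref{prop:samecut}; everything else is a faithful transcription of the triplet argument with $\mathcal R$ replaced by $\mathcal S$.
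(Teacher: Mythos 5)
There is a genuine gap, and it sits exactly where you flag it: your cluster-side replacement for Proposition~\ref{prop:samecut} is not actually proved. What your sketch establishes (via the Observation~\ref{obs:twocut}-style argument) is only that every head-cluster of a cut arc of $N'$ is compatible with every block of $Cut(N)$ -- note also a circularity in your phrasing, since whether a block $C\in Cut(N)$ is ``itself the head-cluster of a cut arc in $N'$'' is part of what must be shown. Compatibility, or even the statement that $Cut(N')$ refines $Cut(N)$, does not yield $Cut(N')=Cut(N)$: a priori $N'$ could place the taxa of a single block $C\in Cut(N)$ below several distinct highest cut arcs (a strictly finer partition), or merge blocks (a strictly coarser one), and nothing in the ``refine into, hence coincide'' step excludes this. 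Ruling out precisely the splitting case is the hard direction $Cut(N)\subseteq Cut(N')$, and in the triplet setting it required the long saturation-based argument of Proposition~\ref{prop:samecut} (the gall at the head of the cut arc of $C$ forcing two triplets on $\{x,y,z\}$, the root gall of $N$ forcing two triplets on $\{x,p,q\}$, etc.). Your appeal to an argument ``analogous to the `$N_1$ must be simple' step'' of Theorem~\ref{theo:simple-clusters} does not transfer: there the incompatible pairs were read off an explicitly constructed system $\mathcal S_d(N)$ for a simple network, whereas here the candidate partitions $Cut(N')$ that must be excluded are exactly the compatible coarsenings/refinements against which no incompatibility argument is available without further work.

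The rest of your outline (restriction of $\mathcal S(N)\subseteq\mathcal S(N')$ to $\mathcal S(N_v)\subseteq\mathcal S(N'_v)$ and to the collapsed networks, the base case via Corollary~\ref{cor:defineSimple}, and the gluing step) is sound modulo routine verification, but it is also largely unnecessary. The paper proves the theorem in two lines by reduction to the triplet result: since $\mathcal S(N)=\bigcup_{T\in\mathcal T(N)}\mathcal C(T)\subseteq\mathcal S(N')$, Proposition 1 of \cite{vIK11b} gives $\mathcal R(\mathcal T(N))\subseteq\mathcal R(N')$, and since $\mathcal R(N)=\mathcal R(\mathcal T(N))$ for level-1 networks, one gets $\mathcal R(N)\subseteq\mathcal R(N')$ and can invoke Theorem~\ref{theo:shapely-saturated-triplet} directly. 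The same reduction is the natural way to repair your missing lemma (it yields $Cut(N)=Cut(N')$ via Proposition~\ref{prop:samecut}), but once you have $\mathcal R(N)\subseteq\mathcal R(N')$ the entire parallel induction becomes redundant. So either carry out a genuine cluster-side proof of $Cut(N)=Cut(N')$ -- which is the real content you have not supplied -- or adopt the reduction, in which case the proof collapses to the paper's.
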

\begin{proof}
Let $N$ and $N'$ be networks in $\cL_1(X)$ such that $N$ is 
4-outwards and saturated and $\mathcal S(N) \subseteq \mathcal S(N')$ 
holds. We need to show that
 $N'$ is equivalent with $N$. \steven{Let} $\mathcal T = \mathcal T(N)$.
Clearly,
$\bigcup_{T \in \mathcal T} \mathcal S(T)
= \mathcal S(N)\subseteq \mathcal S(N')$. 
Combined with \cite[Proposition 1]{vIK11b} which
implies that $\mathcal R(\mathcal T) \subseteq \mathcal R(N')$
and the fact that $ \mathcal R(N) = \mathcal R(\mathcal{T})$ it follows that
$ \mathcal R(N) \subseteq \mathcal R(N')$.
Since, by Theorem \ref{theo:shapely-saturated-triplet},
$N$ is $\cL_1(X)$-defined by $\mathcal R(N)$ it follows
that $N$ and $N'$ are equivalent.
\end{proof}

In fact, due to the very general character of 
\cite[Proposition 1]{vIK11b}, Theorem~\ref{theo:shapely-saturated-cluster}
can easily be extended to prove that, whenever $\mathcal R(N)$ 
has been proven sufficient to uniquely determine 
(in our sense) a specified subfamily - any 
subfamily - of phylogenetic networks $N$, so too is $\mathcal S(N)$
where we canonically extend the notions of an induced triplet system
and softwired cluster system to such networks.

\section{Conclusions}

In this paper, we have presented enumerative results concerning
the number of vertices, arcs, and galls of a binary level-1 network.
By focusing on triplet systems and (softwired) cluster systems 
we have also investigated the question if subsets 
of those systems suffice to uniquely determine the binary
level-1 network that induced them. As part of this, we have 
presented examples that illustrate that a level-1 network
need not be uniquely determined by the triplet/cluster system
it induces\steven{,} thus illustrating the difference between the notion
of encoding and our formalization of uniquely determining.
In addition, we have provided bounds on the size of such a system 
in case the network in question is simple
and has at least four leaves. For the more general class
of $4$-outwards, saturated, binary level-1 networks
we have shown that any network in that class is uniquely
determined by the triplet/softwired cluster system 
it induces. However, a number of open questions remain. For example
for which binary level-1 networks are the aforementioned bounds sharp
and are 4-outwards saturated binary level-1 networks characterizable
by the fact that they are uniquely determined by their induced 
triplet/softwired cluster system?

We conclude with remarking that in \cite{HM13} {\em trinets}, that 
is, rooted directed acyclic graphs on just three leaves have recently
been introduced in the literature for phylogenetic network 
reconstruction. In that paper it was also shown that any level-1
network is encoded by the trinet system that it induces. In addition, it
was shown in \cite{vIM12} that the more general tree-sibling and
 level-2 networks are encoded by
their induced trinet systems, a fact that is not shared in general for the
triplet system or the softwired cluster system induced by such networks. 
Formalizing the idea of ``uniquely
determining'' for trinet systems in a canonical way to {\em $\cL_1(X)$-defining
trinet systems} it might be
interesting to explore what kind of trinet systems $\cL_1(X)$-define
such networks.

\section*{Acknowledgement}
The authors thank the referee for a very careful reading of the paper. 
KTH and PG thank the London Mathematical Society (LMS) for its support in the
context of its Computer Science Small Grant Scheme.

\bibliographystyle{plain}
\bibliography{GambetteHuberKelk_revision_4april2016}

\begin{thebibliography}{10}

\bibitem{B94}
H.-J. Bandelt.
\newblock Phylogenetic networks.
\newblock {\em Verhandl. Naturwiss. Vereins Hamburg (NF)}, 34:51--71, 1994.

\bibitem{BSS04}
M.~Baroni, C~Semple, and M~Steel.
\newblock A framework for representing reticulate evolution.
\newblock {\em Annals of Combionatorics}, 8:391--408, 2004.

\bibitem{BSS06}
M.~Baroni, C~Semple, and M~Steel.
\newblock Hybrids in real time.
\newblock {\em Systematic Biology}, 55:46--56, 2006.

\bibitem{BM03}
D.~Bryant and V.~Moulton.
\newblock Neighbor-net: An agglomerative method for the construction of
  phylogenetic networrks.
\newblock {\em Molecular Biology and Evolution}, 21(2):255--265, 2003.

\bibitem{CLRV08}
C.~Cardona, M.~Llabres, F.~Rosello, and G.~Valiente.
\newblock A distance metric for a class of tree-sibling phylogenetic networks.
\newblock {\em Bioinformatics}, 24:1481--1488, 2008.

\bibitem{CJSS05}
C.~Choy, J.~Jansson, K.~Sadakane, and W.-K. Sung.
\newblock Computing the maximum agreement of phylogenetic networks.
\newblock {\em Theoretical Computer Science}, 335:93--107, 2005.

\bibitem{DHKMS12}
A.~Dress, K.~T. Huber, V.~Moulton, J.~Koolen, and A.~Spillner.
\newblock {\em Basic Phylogenetic Combinatorics}.
\newblock Cambridge University Press, 2012.

\bibitem{GH12}
P.~Gambette and K.~T. Huber.
\newblock On encodings of phylogenetic networks of bounded level.
\newblock {\em Journal of Mathematical Biology}, 61(1):157--180, 2012.

\bibitem{gusfield2004optimal}
D.~Gusfield, S.~Eddhu, and C.~Langley.
\newblock Optimal, efficient reconstruction of phylogenetic networks with
  constrained recombination.
\newblock {\em Journal of Bioinformatics and Computational Biology},
  2(01):173--213, 2004.

\bibitem{H90}
J.~Hein.
\newblock Reconstructing evolution of sequences subject to recombination using
  parsimony.
\newblock {\em Mathematical Biosciences}, 98:185--200, 1990.

\bibitem{HM13}
K.~T. Huber and V.~Moulton.
\newblock Encoding and constructing 1-nested phylogenetic networks with
  trinets.
\newblock {\em Algorithmica}, 66(3):714--738, 2013.

\bibitem{HIKS2011}
K.~T. Huber, L.~J.~J. van Iersel, S.M. Kelk, and R.~Suchecki.
\newblock A practical algorithm for reconstructing level-1 phylogenetic
  networks.
\newblock {\em IEEE/ACM Transactions in Computational Biology and
  Bioinformatics}, 8(3):607--620, 2011.

\bibitem{HRS10}
D.~Huson, R.~Rupp, and C.~Scornavacca.
\newblock {\em Phylogenetic Networks}.
\newblock Cambridge University Press, 2010.

\bibitem{huson2011survey}
D.~Huson and C.~Scornavacca.
\newblock A survey of combinatorial methods for phylogenetic networks.
\newblock {\em Genome biology and evolution}, 3:23--35, 2011.

\bibitem{JNS06}
J.~Jansson, N.~B. Nguyen, and W.-K. Sung.
\newblock Algorithms for combining rooted triplets into a galled phylogenetic
  network.
\newblock {\em SIAM Journal on Computing}, 35(5):1098--1121, 2006.

\bibitem{M09}
D.~Morrison.
\newblock Phylogenetic networks in systematic biology (and elsewhere).
\newblock {\em Res.\,Adv.\,in Systematic Biology}, 1:1--48, 2009.

\bibitem{rossello2009all}
F.~Rossell{\'o} and G.~Valiente.
\newblock All that glisters is not galled.
\newblock {\em Mathematical biosciences}, 221(1):54--59, 2009.

\bibitem{SS03}
C.~Semple and M.~Steel.
\newblock {\em Phylogenetics}.
\newblock Oxford University Press, 2003.

\bibitem{S75}
P.~Sneath.
\newblock Cladistic representation of reticulate evolution.
\newblock {\em Systematic Zoology}, 24(3):360--368, 1975.

\bibitem{Steel1992}
M.~Steel.
\newblock The complexity of reconstructing trees from qualitative characters
  and subtrees.
\newblock {\em Journal of Classification}, 9:91--116, 1992.

\bibitem{I09}
L.~J.~J. van Iersel.
\newblock {\em Algorithms, Haplotypes and Phylogenetic Networks}.
\newblock PhD thesis, Eindhoven University of Technology, Netherlands, 2009.

\bibitem{IKKSHB09}
L.~J.~J. van Iersel, J.~Keijsper, S.~M. Kelk, L.~Stougie, F.~Hagen, and
  T.~Boekhout.
\newblock Constructing level-2 phylogenetic networks from triplets.
\newblock {\em IEEE/ACM Transactions on Computational Biology and
  Bioinformatics}, 6:1667--681, 2009.

\bibitem{vIK11a}
L.~J.~J. van Iersel and S.~M. Kelk.
\newblock Constructing the simplest possible phylogenetic network from
  triplets.
\newblock {\em Algorithmica}, 60(2):207--235, 2011.

\bibitem{vIK11b}
L.~J.~J. van Iersel and S.~M. Kelk.
\newblock When two trees go to war.
\newblock {\em Journal of Theoretical Biology}, 269(1):245 -- 255, 2011.

\bibitem{vIM12}
L.~J.~J. van Iersel and V.~Moulton.
\newblock Trinets encode tree-child and level-2 phylogenetic networks.
\newblock {\em Journal of Mathematical Biology}, 68(7):1707--1729, 2014.

\bibitem{ISS10}
L.~J.~J. van Iersel, C.~Semple, and M.~Steel.
\newblock Locating a tree in a phylogenetic network.
\newblock {\em Information Processing Letters}, 110(23):1037--1043, 2010.

\bibitem{WZZ01}
L.~Wang, K.~Zhang, and L.~Zhang.
\newblock Perfect phylogenetic networks with recombination.
\newblock {\em Journal of Computational Biology}, 8:69--78, 2001.

\bibitem{W10}
S.~Wilson.
\newblock Properties of normal phylogenetic networks.
\newblock {\em Bulletin of Mathematical Biology}, 72:340--358, 2010.

\end{thebibliography}

\end{document}